\newtheorem{teo}{Theorem}[section]
\newtheorem{prop}[teo]{Proposition}
\newtheorem{lem}[teo]{Lemma}
\newenvironment{dem}{\textsc{Proof:}}{\begin{flushright}$\square$\end{flushright}}
\title{Bouchaud Walks with variable drift}
\author{Manuel Cabezas Parra}
\date{\today}
\email{mncabeza@mat.puc.cl}
\address{ Facultad de Matem\'aticas\\
Pontificia Universidad Cat\'olica de Chile\\
Casilla 306-Correo 22, Santiago 6904411, Chile\\
Telephone: [56](2)354-1476\\
Telefax: [56](2)552-5916}
\begin{document}

\begin{abstract}
 We study the possible scaling limits of a sequence of Bouchaud trap models on $\mathbb{Z}$ with a drift which decays to $0$ as we rescale the walks: as the time parameter is rescaled by $n$, the drift decays as $n^{-a}$, for some fixed $a\geq0$. Depending on the speed of the decay of the drift  we obtain three different scaling limits. If the drift decays slowly as we rescale the walks (small $a$), we obtain the inverse of an $\alpha$-stable subordinator as scaling limit.
If the drift decays quickly as we rescale the walks (big $a$), we obtain the F.I.N. diffusion as scaling limit.
 There is a critical speed of decay separating these two main regimes, where a new process appears as scaling limit. This new process is a drifted Brownian motion with a random, purely atomic speed measure. The critical speed of decay $a_c$ of the drift is related to the index $\alpha$ of
the inhomogeneity of the environment by the equation $a_c=\alpha/(\alpha+1)$.
\end{abstract}
\setlength{\baselineskip}{6mm}
 \maketitle

\noindent {\footnotesize
{\it 2000 Mathematics Subject Classification.} 60K37, 60G52, 60F17.

\noindent
{\it Keywords.} Bouchaud trap model, random walk, scaling limit, drift, phase transition.
}

\section{\textbf{Introduction}}

The \textit{Bouchaud trap model} (B.T.M.) is a continuous time random walk $X$ on a graph $\cal G$ with random jump rates. To each vertex $x$ of $\cal G$ we assign a positive number $\tau_x$
where $(\tau_x)_{x \in \cal{G}}$ is an i.i.d. sequence such that
\begin{equation}\label{heavytails}
\lim_{u\rightarrow\infty}u^\alpha\mathbb{P}[\tau_x\geq u]=1
\end{equation}
with $\alpha \in (0,1)$. This means that the distribution of $\tau_x$ has heavy tails.
Each visit of $X$ to $x\in \cal{G}$ lasts an exponentially distributed time with mean $\tau_x$.
Let $S(k)$ be the time of the $k$-th jump of $X$. $(S(k),k\in\mathbb{N})$ is called the \textit{clock process} of $X$.
Let $Y_k:=X(S(k))$ be the position of $X$ after the $k$-th jump. $(Y_k:k\in\mathbb{N})$ is called the \textit{embedded discrete time random walk} associated to $X$.

This model was introduced by J.-P. Bouchaud in \cite{weakergodicitybreaking} and has been studied by physicists as a toy model for the analysis of the dynamics of some complex systems such as spin-glasses.
More precisely, each vertex $x$ of $\cal{G}$ corresponds to a metastable state of the complex system, and $X$ represents the trajectory of the system over its phase space.
One of the phenomena that this model has  helped to understand is that of aging, a characteristic feature of the slow dynamics of many
metastable systems. For an account of the physical literature on the B.T.M. we refer to \cite{phys}.

The model has also been studied by mathematicians on different graphs, exhibiting a variety of behaviors. In \cite{fin}, Fontes, Isopi and Newman analyze the one-dimensional case ($\cal{G}=\mathbb{Z}$)and where the walk $X$ is symmetric.
They obtain a scaling limit for $X$ which is called  the \textit{Fontes-Isopi-Newman} (F.I.N.) singular diffusion. This diffusion is a speed measure change of a
Brownian motion by a random, purely atomic measure $\rho$, where $\rho$ is the Stieltjes measure associated to an $\alpha$-stable subordinator.
Different aging regimes for the one-dimensional case where found by Ben-Arous and C\v{e}rn\'{y} in \cite{bc}. In higher dimensions ($\cal G=\mathbb{Z}^d, d \geq 2$), the symmetric model has a behavior
completely different to the one-dimensional case, as shown by Ben Arous and C\v{e}rn\'{y} in \cite{zetade}, and by Ben Arous, C\v{e}rn\'{y} and Mountford in \cite{two}.
In these papers, a scaling limit and aging results where obtained for $X$.
The scaling limit is called \textit{fractional kinetic process} (F.K.P)
which is a time-change of a $d$-dimensional Brownian motion by the inverse of an $\alpha$-stable subordinator.
In \cite{bbg} and \cite{bbg2} Ben Arous, Bovier and Gayrard obtained aging properties of the model on the complete graph. A study of this walk for a wider class of graphs can be found
on \cite{universal}. For a general account on the mathematical study of the model, we refer to \cite{bcnotes}.

The difference between the one dimensional case and the model in higher dimensions can be understood as follows.
We can express the clock process $S(k)$ of $X$ as $S(k)=\sum_{i=0}^{k-1}\tau_{Y_i}e_i$, where the $e_i$ are standard i.i.d. exponential random variables. Thus, the increments of
$S(k)$ are the depths of the traps $(\tau_x)_{x\in\cal{G}}$ as sampled by $Y_k$.
In the model in dimensions higher than two, the embedded discrete time random walk $Y_k$ is transient (the case $d=2$ is more delicate). Thus $Y_k$ will sample each trap $\tau_x$ a finite number of times.
That implies that $S(k)$ does not have long range interactions with its past and its scaling limit will be a Markovian process, which is an $\alpha$-stable subordinator.
On the other hand, in the one-dimensional symmetric B.T.M., we have that the embedded discrete time random walk $Y_k$ is recurrent.
 Thus $Y_k$ will sample each trap $\tau_x$ an infinite number of times.
In this case, $S(k)$ has long range interactions with its past and its scaling limit will be non-Markovian.
Furthermore, the clock process $S(k)$ will converge to the local time of a Brownian motion integrated against the random measure $\rho$.
Here $\rho$ plays the role of a scaling limit for the environment $(\tau_x)_{x\in\mathbb{Z}}$.

It is natural to ask if we can find intermediate behaviors between the transient case $(d\geq1)$ and the recurrent case $(d=1)$: if we introduce a drift to the one-dimensional B.T.M., note that the embedded discrete random walk becomes transient. Thus, intermediate behaviors between the transient and the recurrent case might appear when one analyzes a sequence of one-dimensional B.T.M.'s with a drift that decreases to $0$ as we rescale the walks. In this paper we study this question, showing that the speed of decay of the drift sets the long-term behavior of the model and exhibiting a sharp phase transition in terms of the type of limiting processes obtained. We next describe with more precision the way in which we define the B.T.M. with drift and the results that are obtained in this paper.

 For each $\epsilon>0$, denote by $X^{\epsilon}$ the B.T.M. on $\mathbb{Z}$ where the transition probabilities of the embedded discrete time random walk are $\frac{1+\epsilon}{2}$ to the right and $\frac{1-\epsilon}{2}$ to the left. We will call this process the B.T.M. with drift $\epsilon$.
 For $a\geq0$, consider a rescaled sequence of B.T.M's with drift $n^{-a}$,
 $(h_a(n)X^{n^{-a}}(tn);t\geq0)$, indexed by $n$, where $h_a(n)$ is an
 appropriate space scaling depending on $a$.
We will see that as the drift decays slowly (small $a$), the sequence
of walks converges to the inverse of an $\alpha$-stable subordinator, whereas
if the drift decays fast (large $a$) the limiting process is the F.I.N.
diffusion. As these two posibilities are qualitatively different, we are led to think that there is either, a gradual interpolation between these two behaviors as the
speed of decay changes, or a sharp transition between them as the speed of decay changes. We establish that there is a sharp transition between the two
scaling limits, that there is a critical speed of decay where a new, previously, process appears and that the transition happens at $a=\alpha/(\alpha+1)$.
 As the main theorem of this paper, we prove that, depending on the value of
 $a$, there are three different scaling limits:

\medskip

\begin{itemize}

\item
 \textbf{Supercritical case} ($a<\alpha/(\alpha+1)$).
  The sequence of walks  converges to the inverse of an $\alpha$-stable subordinator.

\item
\textbf{Critical case} ($a=\alpha/(\alpha+1)$).
 The sequence of walks converges to a process which is a speed measure change of a Brownian motion with drift that we will call the \textit{drifted F.I.N. diffusion}.

\item
 \textbf{Subcritical case } ($a>\alpha/(\alpha+1)$).
The sequence of walks converges to the F.I.N. diffusion.

\end{itemize}
\medskip

\noindent The case $a=0$ (contained in the supercritical case), which corresponds to a constant drift, was already addressed by Zindy in \cite{zindy}.

Let us now make a few remarks concerning the proof of our main theorem.
 The strategy of the proof for the supercritical case is a generalization of the method
used in \cite{zindy} and relies on the analysis of the sequence of processes of first hitting times
$(H^n_b(x);x \in [0,nS])$  ($ S $ is fixed, $b>0$) defined as
\begin{equation}\label{hitting}
H^n_b(x):=\inf\{t:X^{n^{-b}}(t) \geq x\} .
\end{equation}
 We  show that these processes (properly rescaled) converge to an $\alpha$-stable
subordinator. From that, it follows that the maximum of the walks converges to the inverse of an $\alpha$-stable subordinator.
This part of the proof requires some care, because, as we are working with a sequence of walks with variable drift, we cannot apply directly the methods used in \cite{zindy}. It turns out that we have to choose $b$ properly to obtain a sequence of walks with the desired drift as we invert the hitting time processes.
Then, it is easy to pass from the maximum of the walk to the walk itself.
In \cite{fin}
The proof corresponding to the critical case follows the arguments used by
\cite{fin}. There they express rescaled, symmetric one-dimensional B.T.M.'s as speed measure changes of a Brownian motion trough a random speed measure. But here we are working with asymmetric walks, so we cannot work with the expression used there. To treat the asymmetry of the walks, we use a Brownian motion with drift instead of a Brownian motion. That is, we express each walk $X^{n^{-\alpha/(\alpha+1)}}$ as a speed measure change of a Brownian motion with drift, and then prove convergence of the sequence of speed measures to $\rho$.
The latter is achieved by means of a coupling of the environments.
 In the subcritical case, although we obtain the same scaling limit as in \cite{fin} (a F.I.N. diffusion), again, because of the asymmetry of the model, we cannot work with the expression used there.  We deal with this obstacle using, besides a random speed measure, a scaling function.
 That is, we express the rescaled walks as time-scale changes of a Brownian motion. Then we prove that the scale change can be
neglected and show convergence of the sequence of speed measures to the random measure $\rho$.

The organization of the paper is as follows. In section \ref{results} we give the definition of the model and state our main results. There we also give simple heuristic arguments to understand the transition at $a=\alpha/(\alpha+1)$.
In section \ref{super} we obtain the behavior for the supercritical
case, and in section \ref{critical} we obtain the scaling limit for the
 critical case.
The behavior for the subcritical case is obtained in section \ref{sub}.

Finally, we would like to mention that while preparing the
final version of this article we have learned that Theorem (\ref{main})
has been independently obtained by Gantert, M\"{o}rters and Wachtel
\cite{gmw}. There, they also obtain aging results for the
B.T.M. with vanishing drift.

\textbf{Acknowledgements}:
 The author was supported by a fellowship of  the National Commission on Science and Technology of Chile (Conicyt)\#29100243.
 This paper contains material presented at the probability seminar at the P.U.C. of Chile in September 2009.

\section{\textbf{Notations and Main Results}}\label{results}
A Bouchaud trap model on $\mathbb{Z}$ with drift $\epsilon$, $(X^\epsilon(t);t\in[0,\infty])$ is a homogeneous Markov process with jump rates:\\
\label{generator}
 \begin{equation} c (x,y) := \left\lbrace
           \begin{array}{c l}
(1+\epsilon)\tau_x^{-1}/2    \text{   if}\ y=x+1 \\
(1-\epsilon)\tau_x^{-1}/2    \text{   if}\ y=x-1 \\

           \end{array}
         \right., \end{equation}\\

\noindent
where $\tau=(\tau_x)_{x\in \mathbb Z}$ are positive, i.i.d. under a measure $P$ and satisfy\
\begin{equation}
\label{tail}\lim_{u\rightarrow\infty}u^{\alpha} P[\tau_x\geq u]=1.
\end{equation}

For any topological space $E$, $\mathcal{B}(E)$ will stand for the $\sigma$-algebra of Borelians of $E$.
$\mathbb{P}_{\tau}^x$ and $\mathbb{E}_{\tau}^x$ will denote the probability and expectation conditioned on the environment $\tau=(\tau_x)_{x\in\mathbb{Z}}$
and with $X^{\epsilon}(0)=x$.
 These probabilities are often referred as quenched probabilities.
We define $\mathbb{P}^x$ on $\mathbb{Z}^{\mathbb{N}}\times{\mathbb{R}^+}^{\mathbb{Z}}$ stating that
for every $A\in\mathcal B(\mathbb{Z}^{\mathbb{N}})$ and $B\in\mathcal B({\mathbb{R}^+}^{\mathbb{Z}})$,
$\mathbb{P}^x[A\times B]:=\int_B \mathbb{P}^x_{\tau}[C_{\tau}] P(d\tau),$
where $C_{\tau}:=\{x\in\mathbb{Z}^{\mathbb{N}}:(x,\tau)\in A\times B\}.$

$\mathbb{P}^x$ is called the annealed probability. Note that $X^{\epsilon}$ is Markovian w.r.t. $\mathbb{P}^x_{\tau}$ but non-Markovian w.r.t. $\mathbb{P}^x$.
$\mathbb{E}^x$ is the expectation associated to $\mathbb{P}^x$. $\mathbb{P}^0$ and $\mathbb{E}^0$ will be simply denoted as $\mathbb{P}$ and $\mathbb{E}$. Also $\mathbb{P}_{\tau}$
and $\mathbb{E}_{\tau}$ will stand for $\mathbb{P}_\tau^0$ and $\mathbb{E}_\tau^0$ respectively.
These notations will be used with the same meaning for all the processes appearing in this paper.

We have to make some definitions in order to state our main result:
let $B(t)$ be a standard one dimensional Brownian motion starting at zero and $l(t,x)$ be a bi-continuous version of his local time.
Given any locally finite measure $\mu$ on $\mathbb{R}$, denote
$$\phi_{\mu}(s):=\int_{\mathbb{R}}l(s,y)\mu(dy),$$
and its right continuous generalized inverse by
$$\psi_{\mu}(t):=inf\{s>0:\phi_{\mu}(s)>t \}.$$

The right continuous generalized inverse exists by definition, is increasing and, as its name indicates, it is a right continuous function.
Then we define the speed measure change of $B$ with speed measure $\mu$, $X(\mu)(t)$ as
\begin{equation}\label{timechange}
X(\mu)(t):=B(\psi_{\mu}(t)).
\end{equation}

 We also need to define speed measure changes of a drifted Brownian motion.
 Let $C(t):=B(t)+t$. We know that $C(t)$ has a bi-continuous local time $\tilde{l}(t,y)$.
 Given any locally finite measure $\mu$ in $\mathbb R$ we
 define
 $$\tilde{\phi}_{\mu}(s):=\int_{\mathbb R} \tilde{l}(s,y) \mu (dy),$$
 and its generalized right-continuous inverse by
 $$\tilde{\psi}_{\mu}(t):= \inf\{s>0:\tilde{\phi}_{\mu}(s)>t\}.$$
 Then we define $\tilde{X}(\mu)(t)$ (the speed measure change of $C$ with speed measure $\mu$) by
 \begin{equation}\label{timechange2}
\tilde{X}(\mu)(t):=C(\tilde{\psi}_{\mu}(t)).
\end{equation}

 By changing the starting point of our underlying Brownian motion $B$, we can change the starting point of $\tilde{X}(\mu)$ and $X(\mu)$.
  
Let $(x_i,v_i)$ be an inhomogeneous Poisson point process on
$\mathbb{R}\times\mathbb{R}^+$, independent of $B$ with intensity measure
$\alpha v^{-1-\alpha}dxdv$. We define the random measure $\rho$ as
\begin{equation}\label{rho}
 \rho:=\sum v_i \delta_{x_i}.
\end{equation}

The diffusion $(Z(t);t\in[0,T])$ defined as $Z(s):=B(\psi_{\rho}(s))$ is called the \textit{F.I.N diffusion}.
We also define the \textit{drifted F.I.N. diffusion} $\tilde{Z}(t)$ as
$\tilde{Z}(t):=C(\tilde{\psi}_{\rho}(t))$.

 $D[0,T]$ will denote the space of cadlag  functions from $[0,T]$ to $\mathbb{R}$. $(D[0,T],M_1)$, $(D[0,T],J_1)$ and $(D[0,T],U)$ will stand for $D[0,T]$ equipped with the
Skorohod-$M_1$, Skorohod-$J_1$, and uniform topology respectively. We refer to \cite{witt} for an account on these topologies.
 We define $(X^{(n,a)};t\in[0,T])$, a rescaling of a walk with drift $n^{-a}$, by
\begin{equation}
X^{(n,a)}(t):=\left\lbrace
\begin{array}{c l}
\frac{X^{n^{-a}}(tn)}{n^{\alpha(1-a)}}\text{if} \ a<\frac{\alpha}{1+\alpha}\\\\
\frac{X^{n^{-a}}(tn)}{n^{\alpha/(\alpha+1)}}\text{if}\ a\geq\frac{\alpha}{1+\alpha}\\

\end{array}
\right. .
\end{equation}

 Let $V_{\alpha}$ be an $\alpha$-stable subordinator started at zero. That is, $V_{\alpha}$ is the increasing Levy process with Laplace transform $\mathbb{E}[\exp(-\lambda V_{\alpha}(t))]=\exp(-t\lambda^{\alpha}).$
 Now we are in conditions to state the main result of this paper.
         \begin{teo}\label{main}
         For all $T>0$:

\begin{itemize}
         \item[(i)]If $a<\alpha/(\alpha+1)$ we have that
$(X^{(n,a)}(t);t\in [0,T])$
 converges in distribution to
          $(V_{\alpha}^{-1}(t); t\in (0,T))$ in $(D[0,T],U)$ where $V_{\alpha}^{-1}$
          is the right continuous generalized inverse of
          $V_{\alpha}$.
          \item[(ii)] If $a=\alpha/(\alpha+1)$ we have that $(X^{(n,a)}(t); t \in [0,T])$
converges in distribution
         to the drifted F.I.N. diffusion $(\tilde{Z}(t); t \in [0,T])$ on
         $(D[0,T],U)$.
         \item[(iii)] If $a>\alpha/(\alpha+1)$ we have that $(X^{(n,a)}(t); t \in [0;T])$ converges in distribution
         to the F.I.N. diffusion $(Z(t); t \in [0,t]) $ on $(D[0,T],U)$.
\end{itemize}

         \end{teo}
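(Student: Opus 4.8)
The plan is to prove the three parts by three different representations of the rescaled walk, matching the three heuristic regimes.

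For the \textbf{supercritical case} $a<\alpha/(\alpha+1)$ I would extend the first-passage method. For a suitable exponent $b=b(a)$ consider the hitting-time process $H^{n}_{b}$ of (\ref{hitting}) and write, for integer $x$,
\[
H^{n}_{b}(x)=\sum_{0\le y<x}\xi^{n}_{y},\qquad
\xi^{n}_{y}:=\text{(time $X^{n^{-b}}$ spends at $y$ before first reaching $x$)}.
\]
Since the embedded biased walk is transient, each $\xi^{n}_{y}$ is, up to a regeneration decomposition, the product of $\tau_{y}$ with an almost geometric number of visits (of order $n^{b}$) of independent unit exponentials; after the matching space normalization $n^{\gamma}$ and time normalization $n$ the family $(\xi^{n}_{y})$ becomes a weakly dependent triangular array in the domain of attraction of an $\alpha$-stable law, the dependence being absorbed by the regeneration structure of the drifted walk. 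I would show that $(n^{-1}H^{n}_{b}(\lfloor n^{\gamma}u\rfloor);u\ge0)$ converges to the $\alpha$-stable subordinator $V_{\alpha}$ in $(D,J_{1})$, and then invert. Since the rescaled running maximum of $X^{n^{-b}}$ is, for the matching $b$ and $\gamma$, exactly the generalized inverse of this process and equals the running maximum of $X^{(n,a)}$, and since $V_{\alpha}$ is a.s.\ strictly increasing so that inversion is continuous into $(D,U)$, the running maximum of $X^{(n,a)}$ converges to $V_{\alpha}^{-1}$. A separate estimate showing that $X^{n^{-a}}$ stays uniformly close to its running maximum on $[0,T]$ (no macroscopic backtracking, by transience) upgrades the conclusion from the maximum to $X^{(n,a)}$. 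A short computation fixes $b$ and $\gamma$ as the functions of $a,\alpha$ making all normalizations consistent, and the consistency is possible exactly when $a<\alpha/(\alpha+1)$.

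For the \textbf{critical} ($a=\alpha/(\alpha+1)$) and \textbf{subcritical} ($a>\alpha/(\alpha+1)$) cases I would use the scale-and-speed representation of a nearest-neighbour walk. With $L:=n^{\alpha/(\alpha+1)}$ and $s_{\epsilon}$ the scale function of the embedded walk with bias $\epsilon$, realize $X^{(n,a)}$ as the time-and-scale change $\sigma_{n}(\hat A_{n}^{-1}(\cdot))$, where $\sigma_{n}=s_{n}^{-1}\circ B$ with $s_{n}$ the rescaled scale function, and $\hat A_{n}^{-1}$ is the inverse of the additive functional built from a random speed measure $\mu_{n}$ carrying the rescaled trap depths at the points $y/L$. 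The algebraic fact driving the dichotomy is $\epsilon L=n^{\alpha/(\alpha+1)-a}$: at criticality $\epsilon L\to1$, so $s_{n}\to(x\mapsto 1-e^{-2x})$, the scale of $C(t)=B(t)+t$, and $\sigma_{n}(\hat A_{n}^{-1}(\cdot))$ becomes $C(\tilde\psi_{\mu_{n}}(\cdot))$; subcritically $\epsilon L\to0$, so $s_{n}$ converges to the identity, $\sigma_{n}-B\to0$ uniformly on compacts, and one is left with the pure speed-measure change $B(\psi_{\mu_{n}}(\cdot))$ of FIN type. In both cases the limit follows from two ingredients: first, $\mu_{n}\Rightarrow\rho$ in the vague topology, which is the convergence of the i.i.d.\ heavy-tailed trap depths to the Poisson point process of intensity $\alpha v^{-1-\alpha}\,dx\,dv$, together with a second-moment bound showing the shallow traps contribute negligibly to the local-time integrals; second, a continuity theorem, in the spirit of \cite{fin}, for the map sending a speed measure to the associated time-changed (drifted) Brownian motion, using bicontinuity of $l$ and $\tilde l$ and the fact that $\psi_{\rho}$, $\tilde\psi_{\rho}$ are continuous and strictly increasing. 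Composing these (and, for (iii), the negligibility of the scale change) gives convergence to $\tilde Z$ and $Z$. Since $V_{\alpha}^{-1}$, $\tilde Z$ and $Z$ are all a.s.\ continuous, convergence in $J_{1}$ (or $M_{1}$) automatically strengthens to the uniform topology $U$ on $[0,T]$, as claimed.

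The step I expect to be the main obstacle is the functional stable limit for $H^{n}_{b}$ in the supercritical case. Two difficulties compound: the summands $\xi^{n}_{y}$ are genuinely dependent through the excursion structure of the biased walk, so one must construct regeneration times for the drifted walk and prove the regeneration blocks are independent and still heavy-tailed with index $\alpha$; and the bias is $n$-dependent and vanishing, $\epsilon=n^{-b}\to0$, so the number of visits to a site grows like $n^{b}$ and one works in a genuinely non-stationary triangular-array regime in which the $\alpha$ tail index and the right normalization must be tracked by hand. By contrast, in cases (ii) and (iii) the time-change continuity theorem is close to the one in \cite{fin} and the environment convergence is classical; the only new point is the bookkeeping of the asymmetry through $s_{\epsilon}$, which is elementary once the identity $\epsilon L=n^{\alpha/(\alpha+1)-a}$ is available.
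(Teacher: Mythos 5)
Your overall strategy matches the paper's: a first-passage/inversion argument for (i), and a speed-measure/scale representation of nearest-neighbour walks for (ii) and (iii), with the algebraic identity $\epsilon L=n^{\alpha/(\alpha+1)-a}$ driving the trichotomy. But there are two places where what you sketch differs from, or falls short of, what the paper actually does.

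For the critical case the paper does not pass through the scale function of the biased walk at all. Instead it chooses the lattice spacing $s_n=\tfrac12\log\frac{1+n^{-a}}{1-n^{-a}}$ so that a pure speed-measure change of the drifted Brownian motion $C(t)=B(t)+t$, restricted to $s_n\mathbb Z$, has \emph{exactly} the jump rates and transition probabilities of $X^{n^{-a}}$; it then verifies $s_nn^{\alpha/(\alpha+1)}\to1$ and couples the random speed measures $\rho_n$ to $\rho$ via the two-sided subordinator and the $g_\epsilon,c_\epsilon$ construction of \cite{fin}. Your route via $s_n\to(x\mapsto1-e^{-2x})$ is equivalent in spirit but would require carrying a nontrivial scale change to the end, and you would still have to prove a Stone-type continuity theorem for the map $\mu\mapsto\tilde X(\mu)$ built on $C$ rather than on $B$; the paper isolates that as Proposition \ref{stonedrift}, proved using bicontinuity of $\tilde l$ and Corollary 13.6.4 of \cite{witt}. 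Your description of the scale function degenerating to $1-e^{-2x}$ also needs care, because without an explicit lattice-spacing choice the limit scale function depends on how you normalize $s_\epsilon$; the clean statement is the paper's $s_nn^{\alpha/(\alpha+1)}\to1$.

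The genuine gap is in the supercritical case. You note (correctly) that this is the hard part, but you leave the two crucial steps unspecified. First, the exponent $b$ cannot simply be chosen as $a$: because inverting $H^n_b$ reparametrizes time, the drift seen after inversion is governed by a different power, and the paper shows one must take $b^\ast=a/[(1-a)\alpha]$ so that $n^{-b^\ast}=q_a(n)^a$ with $q_a(n)=n^{1/\alpha+b^\ast}$. This is not a cosmetic normalization; it is the point flagged in the paper as the reason Zindy's argument does not transfer directly. Second, your proposal to ``absorb the dependence by the regeneration structure'' is a placeholder, not an argument: the summands $\xi^n_y$ are correlated through the excursion counts of a walk whose bias is itself vanishing, so the regeneration blocks would have $n$-dependent, unbounded lengths and one would have to prove both their asymptotic independence and the $\alpha$-tail of a single block uniformly in $n$. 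The paper replaces this by a concrete soft substitute: the backtracking estimate (Lemma \ref{backtracking}) with window $\nu(n)=\lfloor c\log n\, n^{b^\ast}\rfloor$, the good environment events $\mathcal E_1,\mathcal E_2,\mathcal E_3$ ensuring the $n$-deep traps are far apart, the Markov-inequality bound showing shallow traps contribute $o(n^{1/[(1-a)\alpha]}/\log n)$, and then a direct quenched Laplace-transform computation (Lemma \ref{deeptraps}) for the occupation time of a single deep trap, using the explicit formula $(1-(q+p\Xi(x,n)))n^{b^\ast}\to1$ for the escape probability. The resulting convergence is in $M_1$, not $J_1$ as you assert, and the paper then inverts on the event $\mathcal C(T,S)_n$ that the hitting time of $Sn$ exceeds $Tn^{1/\alpha+b^\ast}$. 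To make your sketch into a proof you would either have to build the regeneration machinery you allude to, or adopt the backtracking-plus-deep/shallow decomposition.
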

We present heuristic arguments to understand the transition at $a=\frac{\alpha}{1+\alpha}$. First we analyze a sequence of discrete time random walks.
Let $(S^{\epsilon}(i),i\in\mathbb{N})$ be a simple asymmetric random walk with drift $\epsilon$,
$S^{\epsilon}(i):=\sum_{k=1}^ib_k^{\epsilon},$
 where $(b_k^{\epsilon})_{i\in\mathbb{N}}$ is an i.i.d. sequence of random variables with:
$\mathbb{P}[b^{\epsilon}_k=1]=\frac{1+\epsilon}{2};\textrm{                   }\mathbb{P}[b^{\epsilon}_k=-1]=\frac{1-\epsilon}{2}.$
We want to find the possible scaling limits of $(S^{\epsilon(n)}(in);i \in [0,T])$, depending on the speed of decay of $\epsilon(n)$ to $0$ as $n\rightarrow\infty$.

We couple the sequence of walks $S^{\epsilon(n)}$ in the following way: Let $(U_i)_{i\in\mathbb{N}}$ be an i.i.d. sequence of uniformly distributed random variables taking values on
$[0,1]$. We require that $S^{\epsilon(n)}$ takes his $i$-th step to the right ($b_i^{\epsilon(n)}=1$) if $U_i>\frac{1-\epsilon(n)}{2}$ and to the left otherwise.
For each walk, we can decompose the steps into two groups: the first group
is given by the steps $i$ such that $\frac{1-\epsilon(n)}{2}<U_i<\frac{1+\epsilon(n)}{2}$ and the second group
consists of the remaining steps.
 We can think that the first group of steps takes account of the drift effect and the second one takes account of the symmetric fluctuations of the walk.

If the walk has given $n$ steps, then the first group has about $n\epsilon(n)$ steps, and the second group has fluctuations of order $\sqrt n$.
 It is obvious that the drift effect will dominate the behavior if $\sqrt n=o(\epsilon(n))$. In this case we will have a ballistic (deterministic) process as a scaling limit.
If $\epsilon(n)=o(\sqrt n)$ the fluctuations will dominate and we will have a Brownian motion as scaling limit.
Finally the two behaviors will be of the same order if $\epsilon(n)\approx\sqrt n$, and a Brownian motion with drift will be the scaling limit.

 The same reasoning can now be used to understand the change of behavior at $a=\alpha/(\alpha+1)$ for the sequence of walks $(X^{n^{-a}}(tn),t\in[0,T])_{n \in \mathbb N}$.
 In order to apply the precedent arguments we first have to estimate the number of steps that $X^{n^{-a}}$ has given up to time $Tn$. To simplify we take $T=1$. First, suppose that $X^{n^{-a}}(n)$ is of order $n^{u}$, where $u$ is to be found.
We know that after $k$ steps, a walk with drift $n^{-a}$ is approximately on site $kn^{-a}$, so, it takes about $n^{u+a}$ steps
to be on site $n^{u}$. Thus, we can also deduce that at time $n$, $X^{n^{-a}}$ has
visited approximately $n^{a}$ times each site.
As the distribution of $\tau_i$ satisfies (\ref{tail}), then the sum $\sum_{i=0}^{n^u}\tau_i$ is of the same order that $\max_{\{0\leq i \leq n^u\}}\tau_i$, and both are of order $n^{u/\alpha}$.
We can estimate the time needed to arrive at $n^{u}$ as the depth of the deepest trap found ($\approx n^{u/\alpha}$) multiplied by the number of visits to that trap ($\approx n^a$). This gives that $n\approx n^{\frac{u}{\alpha}+a}$.
But, we know, by definition, that $X^{n^{-a}}$ arrives at the site $n^{n/u}$ approximately at time $n$. It follows that $1=(u/\alpha)+a$, which yields $u=(1-a)\alpha$.
This means that the number of steps that $X^{n^{-a}}$ has given up to time $n$ is of order $n^{(1-a)\alpha+a}$.

 Again, we can decompose the steps of $X^{n^{-a}}$ into two groups. The first group accounts for the drift effect, and the second one accounts for the fluctuations.
 The first group will have approximately $n^{-a+[(1-a)\alpha+a]}$ steps and the second group will give a contribution to the position of order $n^{\frac{(1-a)\alpha+a}{2}}$.
Now it is easy to see that the ballistic behavior and the fluctuations will be of the same order i.f.f.
 $[(1-a)\alpha + a]/2=(1-a)\alpha$ or
 $a=\alpha/(1+\alpha).$
\section {\textbf{The Supercritical Regime}}\label{super}
The proof for the constant drift case ($a=0$) in \cite{zindy} is roughly as follows: first he prove that the sequence of rescaled first hitting times, $(n^{-1/\alpha}\inf\{s\geq0:X^{\epsilon}(ns)\geq x\}: x\geq 0)$, converges to an $\alpha$-stable subordinator. Then, using that the right continuous generalized inverse of the process of first hitting times is the maximum of $X^{\epsilon}(t)$, he can deduce that $(\max\{n^{-1}X^{\epsilon}(n^{1/\alpha}s): s\leq t\}: t\geq 0)$ converges to the inverse of an $\alpha$-stable subordinator. Finally he shows that the walk and its maximum are close.

For the proof of part (i) of theorem \ref{main} we cannot follow the proof of \cite{zindy} in a straightforward way:
suppose we show that a properly rescaled sequence of first hitting time processes $(p_a(n)H^n_a(nx):x\in\mathbb{R}_+)$ (where $p_a(n)$ is an appropriate scaling) converges to an $\alpha$-stable subordinator. Then, by inverting the processes, we get that the sequence $(\max\{n^{-1}X^{n^{-a}}(p_a(n)^{-1}s):s \leq t\}: t\in\mathbb{R}_+)$ converges to the inverse of an $\alpha$-stable subordinator. But we are searching a limit for $(\max\{d_a(n)X^{n^{-a}}(tn):t\in\mathbb{R}_+\})$ (where $d_a(n)$ is appropriate space scaling). That is, we want to obtain the limit of a sequence of rescaled walks where the drift decays as $n^{-a}$ when the time is rescaled by $n$. But when we invert $(p_a(n)H^n_a(nx):x\in\mathbb{R}_+)$, we obtain the sequence $(\max\{n^{-1}X^{n^{-a}}(p_a(n)^{-1}s):s\leq t\}: t\geq 0)$, which is a sequence of maximums of rescaled walks in which  the drift decays as $n^{-a}$ when the time is rescaled as $p_a(n)^{-1}$.

To solve this, we will prove that the limit of $(q_a(n)H^n_{b^{\ast}}(nx):x\in\mathbb{R}_+)$ is an $\alpha$-stable subordinator, where $q_a(n)$ is an appropriate scaling and $b^{\ast}$ sets an appropriate drift decay and depends on $a$.
Inverting, we will obtain that $(\max\{n^{-1}X^{n^{-b^{\ast}}}(q_a(n)^{-1}s):s\leq t\}:t\geq 0)$ converges to an $\alpha$-stable subordinator.
As we have said, we want the limit of a sequence of rescaled walks with
a drift that decays as $n^{-a}$ as the time parameter is rescaled by $n$.
Hence, when the time parameter is rescaled as $q_a(n)^{-1}$, the drift 
should rescale as $q_a(n)^{a}$. Thus we need to choose $b^{\ast}$ so that $n^{-b^{\ast}}=q_a(n)^{a}$. But we know that $q_a(n)$ is the appropriate scaling for $(H^n_{b^{\ast}}(nx):x\in\mathbb{R}_+)$. Hence, $q_a(n)$ must be the order of magnitude of $H^n_{b^{\ast}}(n)$. That is $q_a(n)$ is of the order of the time that the walk $X^{n^{-b^{\ast}}}$ needs to reach $n$.

We now give a heuristic argument to find $q_a(n)$ and $b^{\ast}$.
When $X^{n^{-b^{\ast}}}(t)$ has given $k$ steps, it has an order  $kn^{-b^{\ast}}$. So it takes about $n^{b^{\ast}+1}$ steps to be on site $n$.
 We can think that the number of visits to each site $x$ is evenly
 distributed. Then each site is visited about $n^{b^{\ast}}$ times before $X^{n^{-b^{\ast}}}$ hits $n$.
 The time that the walks needs to reach $n$ is of the order of the time
spent in the largest trap. Thus we can estimate the total time spent by the
walk as the depth of the deepest trap (which is of order $n^{-1/\alpha}$)
multiplied by the number of visits to that trap. This gives a time of order $n^{1/\alpha+b^{\ast}}$.
What the previous arguments show is that $X^{n^{-b^{\ast}}}(t)$ arrives at $n$ at time $t\approx n^{1/\alpha+b^{\ast}}$ ($q_a(n)\approx n^{1/\alpha +b^{\ast}}$).
 But at that time we want to analyze a walk of drift $(n^{1/\alpha+b^{\ast}})^{-a}$.
 That is, we need that
 $a(1/\alpha+b^{\ast})=b^{\ast}.$
 In this way we find that $b^\ast:=a/[(1-a)\alpha].$

\subsection{The embedded discrete time walk}
 For each natural $n$, the \textit{clock processes} $S^n$ is defined as $S^n(0):=0$.
Furthermore $S^n(k)$ is the time of the $k$-th jump of $X^{n^{-b^\ast}}$. $S^n$ is extended to all $\mathbb{R}^+$ by setting  $S^n(s):=S^n(\lfloor s\rfloor)$.
To each drifted walk $X^{n^{b^\ast}}(t)$ we associate its corresponding \textit{embedded discrete time random walk} $(Y_i^{n^{-b^\ast}}: i \in \mathbb{N})$
 defined as $Y_i^{n^{-b^\ast}}:=X^{n^{-b^\ast}}(t)$ where $t$ satisfies: $S^n(i)\leq t<S^n(i+1)$.

Obviously $Y_i^{n^{-b^\ast}}$ is a discrete time random walk with drift $n^{-b^\ast}$. We can write
$$S(k)=\sum_{i=0}^{k-1}\tau_{Y_i} e_i,$$
where $(e_i)_{i\geq0}$ is an i.i.d. sequence of exponentially distributed random variables with mean $1$.

Define\\
$\epsilon=\epsilon(n):= n^{-b^\ast} $\\
$p=p(n):=(1+\epsilon (n))/2$ \\
$q=q(n):=(1-\epsilon (n))/2$ and\\
$\nu (n):= \lfloor c\log(n)n^{b^{\ast}}\rfloor$     with  $c>2$.

Let $\Xi(x,k)=\Xi (x,k,n)$ be the probability that $Y_i^{\epsilon(n)}$ hits $x$ before $k$ starting at $x+1$. Then we have that
$\Xi(x,k)= q+p\Xi(x+1,k)\Xi(x,k)$ and that $\Xi(k-2,k)=q.$
These observations give a difference equation and an initial
condition to compute $\Xi (x,k)$. Then we get that
\begin{equation}\label{psi}
\Xi (x,k)=r\frac{1-r^{k-x-1}}{1-r^{k-x}},
\end{equation}
where $r=r(n):=q(n)/p(n)$.
Using that formula we can see that the probability that the walk $Y_i^{\epsilon(n)}$ ever hits $x-1$ starting at $x$ is $r$.
We now present a backtracking estimate.
\begin{lem}\label{backtracking} Let $\cal {A}(n):=\{ \min_{i\leq j\leq \zeta_n(n)}(Y_j^{\epsilon(n)}-Y_i^{\epsilon(n)})\geq -\nu(n)\}$
where $\zeta_n(i):=\min\{ k\geq 0 :Y_k^{\epsilon(n)}=i\}$, then
$ \lim_{n \rightarrow \infty}\mathbb P [\cal A(n)]=1.$
\end {lem}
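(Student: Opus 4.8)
The plan is to show that the walk $Y^{\epsilon(n)}_i$, which has drift $\epsilon(n)=n^{-b^\ast}$ to the right, almost surely never backtracks more than $\nu(n)=\lfloor c\log(n)n^{b^\ast}\rfloor$ steps below any site it has already reached, over the whole trajectory up to the time $\zeta_n(n)$ of first reaching $n$. The natural strategy is a union bound over sites. For each integer $i$ with $0\le i\le n$, let $\mathcal{B}_i(n)$ be the event that, starting from the first visit to $i$, the walk ever drops to $i-\nu(n)$. On the complement of $\bigcup_{i}\mathcal{B}_i(n)$ the event $\mathcal{A}(n)$ holds (up to a harmless shift in the definition of which site the minimum is measured from), so it suffices to prove $\sum_{i=0}^{n}\mathbb{P}[\mathcal{B}_i(n)]\to 0$.

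Next I would estimate $\mathbb{P}[\mathcal{B}_i(n)]$ using the hitting-probability formula already derived. By the remark following \eqref{psi}, the probability that $Y^{\epsilon(n)}$ starting at $x$ ever hits $x-1$ is $r=r(n)=q(n)/p(n)=(1-\epsilon(n))/(1+\epsilon(n))$. By the strong Markov property and the spatial homogeneity of the walk, the probability of dropping $\nu(n)$ consecutive levels starting from $i$ is exactly $r^{\nu(n)}$ (each of the $\nu(n)$ successive downcrossings happens independently with probability $r$, and the walk must cross each intermediate level). Hence $\mathbb{P}[\mathcal{B}_i(n)]= r(n)^{\nu(n)}$, uniformly in $i$, and
$$
\sum_{i=0}^{n}\mathbb{P}[\mathcal{B}_i(n)] \le (n+1)\, r(n)^{\nu(n)}.
$$
It remains to check this tends to $0$. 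Write $r(n)=1-\frac{2\epsilon(n)}{1+\epsilon(n)}$, so $\log r(n)\le -\frac{2\epsilon(n)}{1+\epsilon(n)}\le -\epsilon(n)$ for $n$ large, whence $r(n)^{\nu(n)}\le \exp(-\epsilon(n)\nu(n))$. Since $\epsilon(n)=n^{-b^\ast}$ and $\nu(n)\sim c\log(n)n^{b^\ast}$, we get $\epsilon(n)\nu(n)\sim c\log n$, so $r(n)^{\nu(n)}\le \exp(-(c-o(1))\log n)=n^{-c+o(1)}$. Then $(n+1)\,r(n)^{\nu(n)}\le n^{1-c+o(1)}\to 0$ because $c>2>1$. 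This gives $\mathbb{P}[\mathcal{A}(n)]\to 1$.

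The only genuinely delicate point is making the reduction to the union bound precise: the event $\mathcal{A}(n)$ controls $\min_{i\le j\le\zeta_n(n)}(Y_j-Y_i)$ over all pairs $i\le j$, not just excursions below a previously-visited integer site. But since the walk moves by $\pm 1$ steps, the running minimum of $Y_j-Y_i$ over $j\ge i$ is achieved at a time when the walk is at an integer level, and the deepest such descent below any reference level $Y_i$ coincides with the maximal descent below some previously visited integer site; so controlling all the $\mathcal{B}_i(n)$ simultaneously does control $\mathcal{A}(n)$. One should also note that $\zeta_n(n)<\infty$ almost surely (the walk is transient to $+\infty$ since $\epsilon(n)>0$), so the range of sites in the union bound is exactly $\{0,1,\dots,n\}$ as claimed. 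With these observations the estimate above completes the proof.
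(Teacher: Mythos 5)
Your proof is correct and takes essentially the same approach as the paper: a union bound over the first hitting times of the sites $0,\dots,n-1$, the observation via the strong Markov property that the probability of ever descending $\nu(n)$ levels from a given site is exactly $r(n)^{\nu(n)}$, and the conclusion that $(n+1)r(n)^{\nu(n)}\to 0$ once $c>2$. The only cosmetic difference is in how you bound $r(n)^{\nu(n)}$ (via $\log r(n)\le -\epsilon(n)$, versus the paper's use of $(1-1/n^{b^\ast})^{n^{b^\ast}}\to e^{-1}$), and that you spell out in slightly more detail the reduction from the event $\cal A(n)^c$ to the union of the per-site backtracking events.
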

\begin{dem}
We can write
 $$\cal A^c(n)=\bigcup_{x=0}^{n-1}\left\{\min_{\zeta_n(x)\leq i\leq \zeta_n(n)}(Y_i^{\epsilon(n)}-Y_{\zeta_n(x)}^{\epsilon(n)})< -\nu(n)\right\}.$$

Hence
$$\cal A^c(n)\subseteq\bigcup_{x=0}^{n-1}\left\{\min_{\zeta_n(x)\leq i}(Y_i^{\epsilon(n)}-Y_{\zeta_n(x)}^{\epsilon(n)})< -\nu(n)\right\}.$$

But, in order to arrive from $x$ to $x-\nu(n)$,
for each $j=x-1,\ldots,x-\nu(n)$, starting from $j+1$ the
random walk $Y_i^{\epsilon(n)}$ needs to
hit $j$ in a finite time.
 Hence, it takes
$\nu(n)$ realizations of independent events (strong Markov property) of
 probability $r(n)$.  In other words
 $\mathbb P[\cal A^c(n)]\leq nr(n)^{\nu(n)}= n(1-\frac{2}{1+n^{b^\ast}})^{\nu(n)}$,
which can be bounded by $ n(1-\frac{1}{n^{b^\ast}})^{\nu(n)}.$
Replacing $\nu(n)$ we obtain $n((1-\frac{1}{n^{b^\ast}})^{n^{b^\ast}})^{c\log(n)}.$
We can see that $(1-\frac{1}{n^{b^\ast}})^{n^{b^\ast}} \rightarrow e^{-1} \text
{ when } n\rightarrow \infty .$
Now, for $n$ big enough
$\left(1-\frac{1}{n^{b^\ast}}\right)^{n^{b^\ast}}\leq e^{-\frac{1}{2}}$. Then
$$\mathbb P[\cal A^c(n)]\leq nn^{-\frac{1}{2} c}.$$
But $c > 2$, so we get the result.
\end{dem}

Now we state the convergence result for the hitting time processes.
\begin{lem}\label{maximum}
Let
\begin{equation}\label{hittigntimenormalizad}
 H^{(n)}(t):= \frac{H_{b^{\ast}}^{n}(tn)}{n^{(1/\alpha)+b^{\ast}}}.
 \end{equation}
  Then $(H^{(n)}(t); t\in [0,T])$ converges weakly to
$((\frac{\pi\alpha}{\sin(\pi\alpha)})^{-1/\alpha}V_\alpha(t); t\in[0,T])$ on $(D[0,T],M_1)$, where $V_\alpha(t)$ is an $\alpha$-stable subordinator.
 \end{lem}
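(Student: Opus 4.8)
# Proof Proposal for Lemma \ref{maximum}

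The plan is to express the rescaled hitting time $H^{(n)}(t)$ as a sum of independent increments and then apply a triangular-array convergence theorem for sums of i.i.d. random variables to a stable law, together with a tightness argument in $(D[0,T],M_1)$. First I would decompose $H^n_{b^\ast}(tn)$ along the successive sites $1,2,\dots,\lfloor tn\rfloor$: writing $\chi_x := H^n_{b^\ast}(x) - H^n_{b^\ast}(x-1)$ for the time to pass from a first visit of $x-1$ to a first visit of $x$, we get $H^n_{b^\ast}(tn) = \sum_{x=1}^{\lfloor tn\rfloor} \chi_x$. These increments are \emph{not} i.i.d. because of backtracking (the walk may return below $x-1$ before hitting $x$), but Lemma \ref{backtracking} tells us that with probability tending to $1$ the walk never backtracks more than $\nu(n) = \lfloor c\log(n) n^{b^\ast}\rfloor$ sites over the whole excursion up to level $n$. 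On the event $\mathcal A(n)$ the increment $\chi_x$ depends only on the environment in the window $[x-\nu(n), x]$ and on finitely many clock variables there, so after a coupling/truncation argument the $\chi_x$ can be replaced by independent copies $\tilde\chi_x$ that depend on disjoint (or almost disjoint) environment blocks; the overlap of the windows is handled by the standard trick of grouping sites into blocks of length $\gg \nu(n)$ but $\ll n^{1/\alpha}$-relevant-scale, or by a direct second-moment estimate showing the correlations are negligible after rescaling.

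Next I would compute the tail of a single increment $\chi_x$ (equivalently of the time spent climbing one site). Conditionally on the environment, $\chi_x$ is a sum over the visits the walk makes to sites in its backtracking window before hitting $x$, of exponential holding times with means $\tau_y$; by the hitting-probability formula \eqref{psi} and $r(n) = q(n)/p(n) = 1 - 2/(1+n^{b^\ast})$, the expected number of visits to a site $y$ near $x$ before hitting $x$ is of order $n^{b^\ast}$ (the geometric series $\sum_k r^k$ truncated appropriately). Hence $\mathbb E_\tau[\chi_x] \asymp n^{b^\ast}\tau_x$ plus lower-order contributions from neighboring sites, and using the heavy-tail hypothesis \eqref{tail} one obtains
\begin{equation*}
\mathbb P\!\left[\chi_x \geq u\, n^{b^\ast}\right] \sim C\, u^{-\alpha}, \qquad u\to\infty,
\end{equation*}
for an explicit constant $C$; the constant $\pi\alpha/\sin(\pi\alpha)$ arises precisely from averaging the heavy tail against the exponential holding times (the identity $\mathbb E[e^{-\lambda \tau e}] $-type computation, i.e. $\int_0^\infty (1-e^{-s})\alpha s^{-1-\alpha}\,ds = \Gamma(1-\alpha)$ combined with the geometric visit counts gives the $\sin$ factor through the reflection formula). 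With the normalization $n^{(1/\alpha)+b^\ast}$ dividing a sum of $\lfloor tn\rfloor$ such increments each of tail index $\alpha$ and scale $n^{b^\ast}$, the classical stable limit theorem (e.g. Skorokhod's theorem on convergence of i.i.d. triangular arrays to a subordinator, as in \cite{witt}) yields finite-dimensional convergence of $H^{(n)}(t)$ to $(\pi\alpha/\sin(\pi\alpha))^{-1/\alpha} V_\alpha(t)$.

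Finally I would upgrade finite-dimensional convergence to convergence in $(D[0,T],M_1)$. Since $H^{(n)}$ and the limit are both nondecreasing, the $M_1$ topology is the natural one and tightness is essentially automatic: for monotone processes, convergence of the finite-dimensional distributions at a dense set of points, together with tightness of $H^{(n)}(T)$ (which follows from the one-dimensional stable convergence) and the fact that the limit is continuous in probability (a stable subordinator has no fixed discontinuities), implies $M_1$-convergence; one can invoke the criterion in \cite{witt} that for monotone functions $M_1$-convergence reduces to pointwise convergence at continuity points of the limit. I would then restrict everything to the event $\mathcal A(n)$, whose probability goes to $1$ by Lemma \ref{backtracking}, so the difference between the true $\chi_x$ and the independent surrogates $\tilde\chi_x$ vanishes in probability uniformly on $[0,T]$.

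I expect the main obstacle to be the decorrelation step: rigorously passing from the true increments $\chi_x$ — which share overlapping backtracking windows of width $\nu(n)\sim \log n\cdot n^{b^\ast}$ — to a genuinely i.i.d. array to which the stable limit theorem applies. The key is that $\nu(n)$ is only polylogarithmically larger than $n^{b^\ast}$, while the number of increments is of order $n$ and the relevant ``record'' traps are spaced at order $n^{1/\alpha}$ apart in index; so grouping into blocks and discarding the $O(\nu(n))$-sized boundary effects costs a factor that is $o(1)$ after rescaling by $n^{1/\alpha + b^\ast}$. Making this quantitative — controlling both the mean and the tail of the boundary corrections and verifying the Lindeberg-type negligibility of the truncated parts — is where the bulk of the technical work lies.
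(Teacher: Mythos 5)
Your proposal takes a genuinely different route from the paper. The paper does not decompose $H^n_{b^\ast}(tn)$ into per-site increments and invoke a triangular-array stable limit theorem; instead it works directly with finite-dimensional Laplace transforms of the hitting-time process, splits the environment into ``shallow'' traps (Lemma~\ref{shallow}, whose total contribution is negligible) and the sparse, well-separated ``deep'' traps (there are only $\theta(n)\approx(\log n)^{2\alpha/(1-\alpha)}$ of them in $[0,n]$ on the event $\mathcal E(n)$), and computes exactly the Laplace transform of the occupation time $T_x$ of a single deep trap (Lemma~\ref{deeptraps}), where the $\pi\alpha/\sin(\pi\alpha)$ constant emerges from a Beta-integral. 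The backtracking estimate (Lemma~\ref{backtracking}) is used, as in your plan, to render the deep traps effectively independent and to decouple the macroscopic time-blocks $[u_{i-1},u_i]$.

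However, there is a genuine error in your tail computation that is not merely a matter of ``decorrelation.'' You write $\mathbb E_\tau[\chi_x]\asymp n^{b^\ast}\tau_x$ and deduce $\mathbb P[\chi_x\ge u\,n^{b^\ast}]\sim C u^{-\alpha}$. This conflates two different quantities. The expected number of visits to a site $y$ near $x$ before the walk advances from $x-1$ to $x$ is of order $r^{x-1-y}/p$, which is $O(1)$ — \emph{not} of order $n^{b^\ast}$. It is only the \emph{total} number of visits to $y$ up to time $\zeta_n(n)$ that is of order $n^{b^\ast}$, and those visits are spread across the $\sim n^{b^\ast}$ consecutive increments $\chi_{y+1},\dots,\chi_{y+n^{b^\ast}}$, each receiving an $O(1)$ share. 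Consequently a single increment looks like $\chi_x\approx 2\sum_{j\ge0}r^j\tau_{x-1-j}$, a geometric weighting over a window of effective width $\sim n^{b^\ast}$, and its true marginal tail is $\mathbb P[\chi_x\ge u]\sim c\,n^{b^\ast}u^{-\alpha}$ (the window contains $\sim n^{b^\ast}$ opportunities for a large trap), not $\sim c\,u^{-\alpha}$. If the $\chi_x$ really were i.i.d.\ with this heavier tail, the maximum over $n$ sites would be of order $n^{(1+b^\ast)/\alpha}\gg n^{1/\alpha+b^\ast}$, contradicting the scaling of $H^{(n)}$. The discrepancy is resolved only by the strong positive correlations among the $\chi_x$ over windows of width $n^{b^\ast}$ — precisely the structure your per-site decomposition throws away. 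So the blocking into lengths $L\gg\nu(n)$ is not an optional ``decorrelation'' refinement you can defer: the i.i.d.\ stable limit theorem cannot be applied at the per-site level, because the per-site increments have the wrong tail exponent in $n$. You would need to go directly to block increments $H(kL)-H((k-1)L)$, whose tail is $\sim L\,u^{-\alpha}$, for the normalization to come out. The paper's route avoids this entirely by summing the Laplace transforms over the finitely many deep traps, which are automatically well separated on $\mathcal E_2(n)$.

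A smaller imprecision: the $\sin$ factor does not come from the Lévy-measure identity $\int_0^\infty(1-e^{-s})\alpha s^{-1-\alpha}\,ds=\Gamma(1-\alpha)$; in the paper it arises from integrating $1-\frac{1}{1+\tilde\lambda_n z}$ against the heavy-tailed trap distribution (the geometric number of exponential visits yields a $\frac{1}{1+\lambda\tau}$ factor, and the resulting Beta integral $\int_0^1 y^{-\alpha}(1-y)^{\alpha}\,dy$ gives $\frac{\pi\alpha}{\sin\pi\alpha}$ by reflection). Your $M_1$-tightness argument via monotonicity and pointwise convergence is fine and matches the spirit of the paper, which cites \cite{p-spin} for this step.
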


The proof of this lemma will be given in subsection \ref{proofmax}.
We present the proof of part $(i)$ of Theorem \ref{main} using lemma \ref{maximum} and devote the rest of the section to the proof of lemma \ref{maximum}.
\subsection{Proof of (i) of Theorem \ref{main}}
Let us denote $$\bar{X}^n(t):=n^{-1}\max \{X^{n^{-b^\ast}}(sn^{(1/\alpha)+b^{\ast}}); s\in[0,t]\}.$$

First we will prove convergence in distribution of $\bar{X}^n$ to the (right continuous generalized) inverse of $(\frac{\pi\alpha}{\sin(\pi\alpha)})^{-1/\alpha}V_\alpha$
in the uniform topology.
That is, we want to to prove convergence in distribution of the inverse of $(H^{(n)}(t); t\in [0,T])$  to the inverse of
 $((\frac{\pi\alpha}{\sin(\pi\alpha)})^{-1/\alpha}V_\alpha(t); t\in[0,T])$ in the uniform topology.
Define $$\cal C (T,S) _n:=\{H^{(n)}(S) \geq Tn^{(1/\alpha)+b^{\ast}}\}.$$

Then, we have that, on $\cal{C} (T,S)_n$, the right continuous generalized inverse of $(H^{(n)}(s);s\in[0,S])$ is $(\bar{X}^{n}(t);t\in[0,T])$.
Let $T>0$ be fixed, by Lemma \ref{maximum}, we know that we can choose $S$ big enough
so that $\lim_{n\to\infty}\mathbb{P}[\cal C (T,S)_n]$ is as close to $1$ as we want.
Let $D^{\uparrow}[0,T]$ be the subset of $D[0,T]$ consisting of the increasing functions.
By corollary 13.6.4 of \cite{witt}, the inversion map from $(D^{\uparrow}[0,T],M_1)$ to $(D^{\uparrow}[0,T],U)$ is continuous at strictly increasing functions.
Lemma (\ref{maximum}) gives convergence in distribution of $(H^{(n)}(t); t\in [0,S])$ to $((\frac{\pi\alpha}{\sin(\pi\alpha)})^{-1/\alpha}V_\alpha(t); t\in[0,S])$
 in the Skorohod $M_1$ topology. We know that $V_{\alpha}$ is a. s. strictly increasing, that is
 $((\frac{\pi\alpha}{\sin(\pi\alpha)})^{-1/\alpha}V_\alpha(t); t\in[0,S])\in D^\uparrow[0,T]$ almost surely.
 So we can apply corollary 13.6.4 of \cite{witt} and deduce convergence in distribution of $\bar{X}^n$ to the inverse of
 $(\frac{\pi\alpha}{\sin(\pi\alpha)})^{-1/\alpha}V_\alpha$ in the uniform topology.
 As we have said previously, the inverse of $(H^{(n)}(s);s\in[0,S])$ is $(\bar{X}^{n}(t);t\in[0,T])$  in $\cal{C} (T,S)_n$. This proves
convergence of the maximum of the walk. To deduce convergence of the walk itself it suffices to show that the walk is close enough to its maximum
in the uniform topology.
 That is, to prove the theorem, it is enough to show that for all $\gamma > 0$:
$$ \mathbb P\left[\sup_{0 \leq t \leq T}|n^{-1}X^{n^{-b^\ast}}(tn^{(1/\alpha)+b^{\ast}})-\bar{X}^{n}(t)| \geq \gamma\right]\rightarrow 0 .$$

 Again, by Lemma \ref{maximum} we know that $\mathbb P[H^n_{b^{\ast}}(n\log(n))\geq Tn^{(1/\alpha)+b^{\ast}}]\rightarrow 1$.  Hence, we just have to prove that
 $$\mathbb{P}\left[\sup_{0 \leq t \leq H^n_{b^{\ast}}(n\log(n))} |n^{-1}X^{n^{-b^\ast}}(t)-n^{-1}\max \{X^{n^{-b^\ast}}(s); s\in[0,t]\}|\geq \gamma\right]\rightarrow 0.$$
Which is to say,
$$ \mathbb{P}\left[\sup_{0 \leq k\leq \zeta_n(\lfloor n\log(n) \rfloor)}|Y^{\epsilon(n)}_k-\bar{Y}^{\epsilon(n)}_k| \geq
 n \gamma \right]\rightarrow 0 .$$
where $\bar{Y}^{\epsilon(n)}$ is the maximum of $Y^{\epsilon(n)}$.
But, we can apply Lemma \ref{backtracking} to see that this is the case.
\subsection {The environment}
Here we give estimates concerning the environment.
For each $n\in\mathbb N$ define
 $$g(n):=\frac{n^{1/\alpha}}{(\log(n))^\frac{2}{1-\alpha}}.$$

Now, for each site
 $x\in \mathbb N$, we say that
 $x$ is an {\it $n$-deep trap} if $\tau_x \geq g(n)$.
Otherwise we will say that $x$ is an {\it $n$-shallow trap}.
We now order the set of $n$-deep traps according to their position
from left to right. Then call $\delta_1(n)$  the leftmost $n$-deep trap
and in general call for $j\ge 1$, $\delta_j(n)$  the $j$-th $n$-deep trap. The number of $n$-deep traps in $[0,n]$ is denoted by $\theta (n)$.
Let us now define
$$\cal E_1 (n) := \left\{n\varphi(n)\left(1-\frac{1}{\log(n)}\right)\leq \theta_n \leq n\varphi(n)\left(1+\frac{1}{\log(n)}\right)\right\},$$
$$\cal E_2 (n):=\{\delta_1\wedge(\min_{1\leq j\leq \theta_n-1}(\delta_j-\delta_{j-1}))\leq\rho(n)\} ,$$
$$\cal E_3 (n):=\{\max_{-\nu(n)\leq x\leq 0} \tau_x <g(n) \}, \textrm{                  and}$$
$$\cal E(n):=\cal E_1 (n)\cap\cal E_2 (n)\cap\cal E_3 (n)$$
where $\rho(n):= n^\kappa \ \ \ \kappa<1$ and
$\varphi (n):= \mathbb P[\tau_x\geq g(n)] $.
 \begin{lem} We have that
$ \lim_{n\rightarrow\infty} \mathbb P [\cal E (n)]=1.$
\end {lem}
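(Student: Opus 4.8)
The plan is to show that each of the three events $\mathcal{E}_1(n)$, $\mathcal{E}_2(n)$, $\mathcal{E}_3(n)$ has probability tending to $1$, and then conclude by a union bound since $\mathbb{P}[\mathcal{E}^c(n)]\leq \mathbb{P}[\mathcal{E}_1^c(n)]+\mathbb{P}[\mathcal{E}_2^c(n)]+\mathbb{P}[\mathcal{E}_3^c(n)]$. The three estimates are essentially independent of one another and each is a routine consequence of the tail hypothesis \eqref{tail} together with the fact that $\varphi(n)=\mathbb{P}[\tau_x\geq g(n)]$ behaves like $g(n)^{-\alpha}=(\log n)^{2\alpha/(1-\alpha)}n^{-1}$ as $n\to\infty$ (by \eqref{tail}), so that in particular $n\varphi(n)\sim(\log n)^{2\alpha/(1-\alpha)}\to\infty$.

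For $\mathcal{E}_1(n)$: the number $\theta_n$ of $n$-deep traps in $[0,n]$ is a sum of $n$ i.i.d. Bernoulli$(\varphi(n))$ random variables, hence has mean $n\varphi(n)$ and variance $n\varphi(n)(1-\varphi(n))\leq n\varphi(n)$. By Chebyshev's inequality,
\[
\mathbb{P}[\mathcal{E}_1^c(n)]=\mathbb{P}\!\left[|\theta_n-n\varphi(n)|>\tfrac{n\varphi(n)}{\log n}\right]\leq \frac{n\varphi(n)(\log n)^2}{(n\varphi(n))^2}=\frac{(\log n)^2}{n\varphi(n)}.
\]
Since $n\varphi(n)\sim(\log n)^{2\alpha/(1-\alpha)}$ and $2\alpha/(1-\alpha)>2$ precisely when $\alpha>1/2$, this naive bound is not quite enough for all $\alpha\in(0,1)$; one instead uses a Bernstein/Chernoff bound for the binomial, which gives $\mathbb{P}[\mathcal{E}_1^c(n)]\leq 2\exp(-c\,n\varphi(n)/\log^2 n)$ for a constant $c>0$, and $n\varphi(n)/\log^2 n\to\infty$ for every $\alpha\in(0,1)$. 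For $\mathcal{E}_3(n)$: $\mathbb{P}[\mathcal{E}_3^c(n)]\leq (\nu(n)+1)\varphi(n)$ by a union bound over the $\nu(n)+1$ sites in $[-\nu(n),0]$, and $\nu(n)\varphi(n)\sim c\log(n)\,n^{b^\ast}\cdot(\log n)^{2\alpha/(1-\alpha)}n^{-1}\to 0$ because $b^\ast=a/[(1-a)\alpha]<1$ (as $a<\alpha/(\alpha+1)$ forces $b^\ast<1$), so the polynomial factor $n^{b^\ast-1}$ beats the logarithmic factors.

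The main work is $\mathcal{E}_2(n)$, the statement that, with high probability, the first deep trap $\delta_1$ and all gaps $\delta_j-\delta_{j-1}$ between consecutive deep traps inside $[0,n]$ are at most $\rho(n)=n^\kappa$ for some $\kappa<1$. On the event $\mathcal{E}_1(n)$ we may assume $\theta_n$ is of order $n\varphi(n)$, so the typical gap is of order $1/\varphi(n)\approx n/(\log n)^{2\alpha/(1-\alpha)}$, which is only slightly smaller than $n$, so $\kappa$ must be chosen carefully — but any fixed $\kappa$ with the typical-gap exponent $<\kappa<1$ works, and such $\kappa$ exists since the typical-gap exponent is $1$ only up to logarithmic corrections; more precisely one needs $\kappa$ close to $1$. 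The cleanest route is a direct union bound: the probability that some window of length $\lceil\rho(n)\rceil$ inside $[0,n]$ contains no deep trap is at most $n(1-\varphi(n))^{\rho(n)}\leq n\exp(-\varphi(n)\rho(n))$, and $\varphi(n)\rho(n)\sim (\log n)^{2\alpha/(1-\alpha)}n^{\kappa-1}$, which tends to $0$ — so this crude bound fails and one must be more careful. The fix is to take $\kappa<1$ but large enough that $n^{1-\kappa}$ is beaten by... no: instead, observe $\varphi(n)\rho(n)=\Theta((\log n)^{2\alpha/(1-\alpha)} n^{\kappa-1})\to 0$ regardless of $\kappa<1$, so a window argument at scale $n^\kappa$ cannot work directly. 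The correct statement intended is surely that $\rho(n)=n^\kappa$ with $\kappa<1$ is the \emph{threshold we compare against}, and the event says the gaps are \emph{smaller} than this; since typical gaps are $\approx n/\mathrm{polylog}$, this is false for $\kappa<1$. I would therefore re-read the definition: $\rho(n)$ here coincides with $\nu(n)$-type scalings only if $\kappa$ is meant to satisfy $\kappa>1$ in disguise, or the relevant comparison is $\delta_1\wedge(\text{min gap})$ being bounded \emph{below}; assuming the latter, the proof is instead that no gap is \emph{too small}, i.e. that distinct deep traps are well-separated, which follows from $\mathbb{P}[\exists\, j:\delta_j-\delta_{j-1}\leq \rho(n)]\leq \theta_n^2\varphi(n)\rho(n)\approx (n\varphi(n))^2\varphi(n)\rho(n)$, and choosing $\rho(n)=n^\kappa$ with $\kappa$ small makes this $\to 0$. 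I would present $\mathcal{E}_2(n)$ via this second-moment/union bound over pairs of sites within distance $\rho(n)$ that are both deep, giving $\mathbb{P}[\mathcal{E}_2^c(n)]\leq n\rho(n)\varphi(n)^2\to 0$ since $\varphi(n)^2\sim(\log n)^{4\alpha/(1-\alpha)}n^{-2}$ and $n\rho(n)=n^{1+\kappa}$ with $\kappa<1$ gives exponent $\kappa-1<0$. That is the step where the precise meaning of the inequality defining $\mathcal{E}_2$ must be pinned down, and it is the main obstacle; everything else is Chebyshev/Chernoff plus the tail asymptotic \eqref{tail}.
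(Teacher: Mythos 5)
Your overall strategy matches the paper's: a union bound over $\mathcal{E}_1,\mathcal{E}_2,\mathcal{E}_3$, concentration for the binomial $\theta_n$ to handle $\mathcal{E}_1$, pair counting for $\mathcal{E}_2$, and a union bound over the $\nu(n)$ sites for $\mathcal{E}_3$. Your treatments of $\mathcal{E}_2$ and $\mathcal{E}_3$ are correct, and in particular you rightly diagnose the sign typo in the definition of $\mathcal{E}_2(n)$ (the inequality should read $\geq\rho(n)$, so that the event asserts the deep traps are well-separated, as the paper itself uses later); your bound $\mathbb{P}[\mathcal{E}_2^c(n)]\leq n\rho(n)\varphi(n)^2+\rho(n)\varphi(n)\to0$ is exactly the pair-counting argument the paper's one-line hint points to.

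There is, however, a genuine gap in your treatment of $\mathcal{E}_1$. You correctly observe that Chebyshev gives only $\mathbb{P}[\mathcal{E}_1^c(n)]\leq(\log n)^2/(n\varphi(n))\sim(\log n)^{2-2\alpha/(1-\alpha)}$, which requires $\alpha>1/2$. But your assertion that $n\varphi(n)/\log^2 n\to\infty$ for every $\alpha\in(0,1)$ is false: since $n\varphi(n)\sim(\log n)^{2\alpha/(1-\alpha)}$ one has $n\varphi(n)/\log^2 n\sim(\log n)^{2\alpha/(1-\alpha)-2}$, which tends to $\infty$ only when $\alpha>1/2$ and tends to $0$ when $\alpha<1/2$, so the Chernoff bound $2\exp(-c\,n\varphi(n)/\log^2 n)$ does not close the gap. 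The underlying problem is not the choice of concentration inequality but the event itself: for $\alpha<1/2$ the CLT fluctuations of $\theta_n$ are of order $\sqrt{n\varphi(n)}\sim(\log n)^{\alpha/(1-\alpha)}$, which strictly exceeds the allowed deviation $n\varphi(n)/\log n\sim(\log n)^{2\alpha/(1-\alpha)-1}$ precisely when $\alpha/(1-\alpha)<1$, i.e.\ $\alpha<1/2$; in that range $\mathbb{P}[\mathcal{E}_1(n)]\to0$, so the lemma as stated (and the paper's terse appeal to ``the Markov inequality'') is not correct. The repair is to loosen the tolerance in $\mathcal{E}_1(n)$, replacing $1/\log n$ by something like $(n\varphi(n))^{-1/3}$ or $1/\log\log n$, which is harmless where $\mathcal{E}_1$ is used downstream (the Laplace-transform computation in the proof of Lemma~\ref{maximum} only needs $\theta_n=n\varphi(n)(1+o(1))$). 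You should state that correction explicitly rather than assert an asymptotic that fails for $\alpha\leq1/2$.
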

\begin {dem} $\theta (n)$ is binomial with parameters $(n,\varphi(n))$. $\cal E_1$ is estimated using the Markov inequality. To
control $\cal E_2$ it is enough to see that in ${0,..,n}$ there are $O(n\rho(n))$
pairs of points at a distance less than $\rho (n)$. The estimate on $\cal E_3$ is trivial.
\end {dem}

\subsection {Time control}
In this subsection we prove results about the time spent by the walk on the traps.
\subsubsection {Shallow traps} Here we will show that the time that the walks spend in the shallow traps is negligible.
 \begin{lem}\label{shallow}
 Let $\cal I(n):=\left\{\sum_{i=0}^{\zeta_n(n)} \tau_{Y^{\epsilon(n)}_i} \textbf {$e_i 1$}_{\{\tau_{Y^{\epsilon(n)}_i}\leq\ g(n)\} } \leq \frac{n^{1/[(1-a)\alpha]}}{\log (n)} \right\}
 $. Then
 \begin{equation} \label{tpp}
 \mathbb P[\cal I(n)]\rightarrow 1 \textrm{           as                } n\rightarrow\infty .
 \end{equation}
 \end{lem}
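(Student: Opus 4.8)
The plan is to reduce this to a first-moment estimate via Markov's inequality, after discarding the negligible event on which the embedded walk backtracks too far. First note that, since $b^\ast=a/[(1-a)\alpha]$, one has $1/\alpha+b^\ast=1/[(1-a)\alpha]$, so the threshold appearing in $\cal I(n)$ is $n^{1/\alpha+b^\ast}/\log(n)$, i.e.\ the hitting-time scale of Lemma \ref{maximum} divided by a logarithm. Writing $W_n:=\sum_{i=0}^{\zeta_n(n)}\tau_{Y^{\epsilon(n)}_i}e_i\mathbf 1_{\{\tau_{Y^{\epsilon(n)}_i}\le g(n)\}}$, I would bound
$$\mathbb P[\cal I^c(n)]\le\mathbb P[\cal A^c(n)]+\mathbb P\big[\{W_n> n^{1/\alpha+b^\ast}/\log(n)\}\cap\cal A(n)\big],$$
the first term going to $0$ by Lemma \ref{backtracking}; it remains to control the second.

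On $\cal A(n)$ the walk stays above $-\nu(n)$ up to time $\zeta_n(n)$, so $\mathbf 1_{\cal A(n)}W_n\le\widetilde W_n:=\sum_{x=-\nu(n)}^{n-1}\tau_x\mathbf 1_{\{\tau_x\le g(n)\}}\sum_{i=0}^{\zeta_n(n)}\mathbf 1_{\{Y^{\epsilon(n)}_i=x\}}e_i$ (on $\cal A^c(n)$ the left side is $0$ and the right side nonnegative, so the inequality holds everywhere), and $\widetilde W_n$ ranges over the fixed window of sites $[-\nu(n),n-1]$. Conditioning on the trajectory $(Y^{\epsilon(n)}_i)_i$ — which is independent of $\tau$ and of $(e_i)_i$ — and taking expectations first over $(e_i)_i$ (mean $1$) and then over $\tau$, with $N_x$ the number of visits of $Y^{\epsilon(n)}$ to $x$ before $\zeta_n(n)$,
$$\mathbb E[\mathbf 1_{\cal A(n)}W_n]\le\mathbb E[\widetilde W_n]=\sum_{x=-\nu(n)}^{n-1}\mathbb E_\tau\big[\tau_x\mathbf 1_{\{\tau_x\le g(n)\}}\big]\,\mathbb E[N_x].$$

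Then I would estimate the two factors separately. For $\mathbb E[N_x]$: from any $x\le n-1$ the probability that $Y^{\epsilon(n)}$ returns to $x$ before hitting $n$ equals $p\,\Xi(x,n)+q\le p\,r(n)+q=2q(n)$ by (\ref{psi}) (using $\Xi(x,n)\le r(n)$ and $p\,r(n)=q$), so the number of returns is dominated by a geometric variable of parameter $1-2q(n)=\epsilon(n)$, whence $\mathbb E[N_x]\le\epsilon(n)^{-1}=n^{b^\ast}$ uniformly in $x$. For the environment factor: by (\ref{tail}) there is a constant with $P[\tau_x>t]\le 2t^{-\alpha}$ for large $t$, hence (crucially using $\alpha<1$) $\mathbb E_\tau[\tau_x\mathbf 1_{\{\tau_x\le g(n)\}}]\le\int_0^{g(n)}P[\tau_x>t]\,dt\le C\,g(n)^{1-\alpha}$ for $n$ large. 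Since $b^\ast<1$ in the supercritical regime, the window $[-\nu(n),n-1]$ contains at most $2n$ sites for $n$ large, so
$$\mathbb E[\mathbf 1_{\cal A(n)}W_n]\le 2n\cdot n^{b^\ast}\cdot C\,g(n)^{1-\alpha}=2C\,n^{1+b^\ast}\frac{n^{(1-\alpha)/\alpha}}{(\log n)^2}=\frac{2C\,n^{1/\alpha+b^\ast}}{(\log n)^2}$$
by the explicit form of $g(n)$. Markov's inequality then yields $\mathbb P[\{W_n> n^{1/\alpha+b^\ast}/\log(n)\}\cap\cal A(n)]\le 2C/\log(n)\to0$, which completes the proof; note that the particular choice of $g(n)$ is exactly what delivers the saving factor $1/\log(n)$. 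The only subtle points are the uniform-in-$x$ Green's-function bound (handled by (\ref{psi}) and the return-probability computation) and the need to peel off $\cal A(n)$ before taking expectations — since $\cal A(n)$ is not independent of the walk — which is precisely what the domination $\mathbf 1_{\cal A(n)}W_n\le\widetilde W_n$ achieves.
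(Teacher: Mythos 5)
Your proof is correct and follows essentially the same route as the paper: peel off the backtracking event via Lemma \ref{backtracking}, bound the restricted sum by a sum over the window $[-\nu(n),n]$, apply Markov's inequality, and multiply the uniform Green's-function bound $\mathbb E[N_x]\le n^{b^\ast}$ (the paper derives it from (\ref{psi}); you observe directly that the return probability is at most $2q(n)$) by the truncated first moment $\mathbb E[\tau_0\mathbf 1_{\{\tau_0\le g(n)\}}]\le Cg(n)^{1-\alpha}$, with the factor $(\log n)^{-2}$ built into $g(n)$ providing the required $1/\log n$ saving. The only cosmetic difference is that the paper's text inserts the indicator $\mathbf 1_{\{Y_i^{\epsilon(n)}\ge-\nu(n)\}}$ under the guise of intersecting with $\cal E(n)$, whereas you correctly attribute this reduction to $\cal A(n)$, which is what actually justifies it; your window should run to $x=n$ rather than $n-1$, but this changes nothing.
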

 \begin{dem}
We have that
$\mathbb P[\cal I(n)^c]=\mathbb P[\cal I(n)^c\cap\cal E(n)]+ o(1) $.
Using the Markov inequality it suffices to show that
$$\mathbb E \left[\sum_{i=0}^{\zeta_n(n)}\tau_{Y^{\epsilon(n)}_i}e_i1_{\{\tau_{Y^{\epsilon(n)}_i}<g(n)\}}1_{\{Y^{\epsilon(n)}_i\geq-\nu(n)\}}\right]
=o\left(\frac{n^{1/[(1-a)\alpha]}}{\log(n)}\right). $$

The number of visits of $Y^{\epsilon(n)}_i$ to $x$ before time $\zeta_n(n)$ is $1+G(x,n)$, where $G(x,n)$ is
a geometrically distributed random variable of parameter $1-(q+p\Xi(x,n))$ (the parameter is the probability that, $Y^{\epsilon(n)}_i$, starting at $x$, hits
$n$ before returning to $x$). Also
\begin{equation}\label{trampaschicas}
\mathbb E_\tau\left[\sum_{i=0}^{\zeta_n(n)}\tau_{Y^{\epsilon(n)}_i}e_i1_{\{\tau_{Y^{\epsilon(n)}_i}<g(n)\}}1_{\{Y^{\epsilon(n)}_i\geq-\nu(n)\}}\right]
\leq\sum_{x=-\nu(n)}^n\tau_x(1+\mathbb E_\tau[G(x,n)])1_{\{\tau_x<g(n)\}}.
\end{equation}

Using (\ref{psi}) we can deduce that $(1+\mathbb E[G(x,n)])\leq\frac{(1-r(n))}{p}\leq cn^{-b^\ast}$.
So, averaging with respect to the environment in (\ref{trampaschicas}) we get
$$\mathbb E \left[\sum_{i=0}^{\zeta_n(n)}\tau_{Y^{\epsilon(n)}_i}e_i1_{\{\tau_{Y^{\epsilon(n)}_i}<g(n)\}}1_{\{Y^{\epsilon(n)}_i\geq-\nu(n)\}}\right]\leq C n^{1+b^\ast}\mathbb
E[\tau_0 1_{\{\tau_0<g(n)\}}].$$
Also
$$\mathbb E[\tau_0 1_{\{\tau_0<g(n)\}}]\leq \sum_{j=0}^{\infty} (1/2)^jg(n)\mathbb P[\tau_0>(1/2)^{j+1}g(n)].$$

Now, using (\ref{heavytails}) there exists  a constant $C$ such that the righthand side of the above inequality is bounded above by
$$ C g(n)^{1-\alpha} \sum_{j=0}^{\infty} ((1/2)^{1-\alpha})^j.$$

Furthermore, since $1-\alpha > 0$ this expression is bounded above by $C g(n)^{1-\alpha}.$
This finishes the proof. \end{dem}
 \subsubsection{Deep traps} Here we will estimate the time spent in deep traps.
We define the occupation time for $x \in \mathbb Z$ as
$$T_x=T_x(n):= \sum_{i=0}^{\zeta_{n}(n)}\tau_{Y^{\epsilon(n)}_i}\textbf{$e_i$ 1}_{\{Y^{\epsilon(n)}_i=x\}}. $$

 The walk visits $x$, $G(x,n)+1$ times before $\zeta_n(n)$, and each visit lasts an exponentially distributed time.
This allows us to control the Laplace transform of $T_x$. For any pair of sequences of real numbers
$(a_n)_{n\in\mathbb{N}}$, $(b_n)_{n\in\mathbb{N}}$, $a_n\sim b_n$ will mean that $\lim_{n\rightarrow\infty} \frac{a_n}{b_n}=1$.
 \begin{lem}\label{deeptraps}
Let $\lambda > 0$. Define $\lambda_n :=
\frac{\lambda}{n^{1/[(1-a)\alpha]}}$. Then we have that
$$ \mathbb E^x[1-\exp(-\lambda_nT_x)|\tau_x\geq g(n)]\sim\frac{\mathbb P[\tau_x\geq g(n)]^{-1}\alpha\pi\lambda^{-\alpha}}{n\sin(\alpha \pi)}.$$
 \end{lem}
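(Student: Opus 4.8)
The plan is to compute the quenched Laplace transform of $T_x$ exactly, then average over the environment conditioned on $\{\tau_x \geq g(n)\}$ and extract the leading asymptotics. First I would observe that, conditionally on the environment and on the path of $Y^{\epsilon(n)}$, the quantity $T_x$ is a sum of $G(x,n)+1$ i.i.d.\ mean-$\tau_x$ exponential holding times, where $G(x,n)$ is geometric with success parameter $w(x,n):=1-(q+p\,\Xi(x,n))$ (the probability that the walk started at $x$ reaches $n$ before returning to $x$). A geometric sum of exponentials is again exponential, so conditionally on $\tau_x$ one gets the clean identity
\begin{equation*}
\mathbb{E}_\tau^x\!\left[e^{-\lambda_n T_x}\right]=\frac{w(x,n)}{w(x,n)+(1-w(x,n))\,\lambda_n\tau_x}\,,
\end{equation*}
hence $\mathbb{E}_\tau^x[1-e^{-\lambda_n T_x}] = \dfrac{(1-w(x,n))\lambda_n\tau_x}{w(x,n)+(1-w(x,n))\lambda_n\tau_x}$. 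Using \eqref{psi} and the computation already invoked in the proof of Lemma \ref{shallow}, $(1-w(x,n))/w(x,n) = \mathbb{E}_\tau^x[G(x,n)+1]-1 \sim$ a constant multiple of $n^{-b^\ast}$; more precisely I would pin down the exact constant, which should give $(1-w(x,n))/w(x,n)\to$ a finite limit after multiplying by $n^{b^\ast}$, using $r(n)=q/p$ and $b^\ast = a/[(1-a)\alpha]$. The key point is that $\lambda_n\tau_x$ is, for $\tau_x\asymp g(n)$, of order $n^{1/\alpha}/(\log n)^{2/(1-\alpha)}\cdot n^{-1/[(1-a)\alpha]}$, which is a vanishing power of $n$ up to logs since $1/[(1-a)\alpha] > 1/\alpha$; combined with the extra $n^{-b^\ast}$ this makes the denominator $w(x,n)+(1-w(x,n))\lambda_n\tau_x \to w(x,n)$.

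Next I would average over $\tau_x$ conditioned on $\{\tau_x\geq g(n)\}$. After the reductions above, $\mathbb{E}^x[1-e^{-\lambda_n T_x}\mid \tau_x\geq g(n)]$ is asymptotically equal to $(\text{const})\, n^{-b^\ast}\lambda_n\,\mathbb{E}[\tau_x\mid\tau_x\geq g(n)]$ divided by the relevant normalization — but here one must be careful because $\mathbb{E}[\tau_x\mid\tau_x\geq g(n)]$ is infinite in the limit sense ($\alpha<1$), so this naive expansion is not legitimate uniformly. The correct approach is to write the conditional expectation as an integral against the tail: with $\varphi(n)=\mathbb{P}[\tau_x\geq g(n)]$,
\begin{equation*}
\mathbb{E}^x\!\left[1-e^{-\lambda_n T_x}\,\middle|\,\tau_x\geq g(n)\right]
=\frac{1}{\varphi(n)}\int_{g(n)}^{\infty}\frac{(1-w)\lambda_n t}{w+(1-w)\lambda_n t}\,\mathbb{P}(\tau_x\in dt),
\end{equation*}
substitute $u = \lambda_n t$, use the tail estimate \eqref{tail} in the form $\mathbb{P}[\tau_x\geq t]\sim t^{-\alpha}$ to get $\mathbb{P}(\tau_x\in dt)\approx \alpha t^{-1-\alpha}dt$, and recognize the resulting integral $\int_0^\infty \frac{cu}{w+cu}\,\alpha u^{-1-\alpha}\,du$ (after rescaling the lower limit $\lambda_n g(n)\to 0$) as a Beta-type integral equal to $\pi\alpha/\sin(\pi\alpha)$ times the appropriate power of $\lambda$ and of the constant $c\sim n^{-b^\ast}$-prefactor. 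Tracking the powers of $n$: the $\lambda_n^{\alpha}$ from the substitution gives $\lambda^\alpha n^{-\alpha/[(1-a)\alpha]} = \lambda^\alpha n^{-1/(1-a)}$, the constant prefactor contributes the remaining power of $n$, and one checks that $n^{-1/(1-a)}\cdot(\text{prefactor power})\cdot\varphi(n)^{-1}\sim \varphi(n)^{-1}/n$ up to the claimed $\alpha\pi/\sin(\alpha\pi)$ and $\lambda^{-\alpha}$ — wait, the exponent on $\lambda$ in the statement is $-\alpha$, so I would re-examine the substitution direction ($u=1/(\lambda_n t)$ rather than $u=\lambda_n t$ may be the natural one here), which is exactly the kind of bookkeeping the final writeup must get right.

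The main obstacle I anticipate is precisely this asymptotic bookkeeping: showing that the lower cutoff $g(n)$ can be pushed to the boundary of integration without changing the leading term (this uses $\lambda_n g(n)\to 0$, i.e.\ that $g(n) = o(n^{1/[(1-a)\alpha]})$, which holds because $1/[(1-a)\alpha]>1/\alpha$ and the log factor only helps), and simultaneously controlling the error from replacing $\mathbb{P}[\tau_x\geq t]$ by $t^{-\alpha}$ and from replacing the exact $w(x,n)$ (which depends on $x$ through $\Xi(x,n)$, hence is not constant over $[0,n]$) by its leading asymptotic. For the latter I would note that $\Xi(x,n)=r\frac{1-r^{n-x-1}}{1-r^{n-x}}$ and that for the range of $x$ that matters the correction is uniformly negligible; the dominated-convergence argument for the integral needs a uniform integrable bound on $\frac{cu}{w+cu}\,u^{-1-\alpha}$ near $0$ and $\infty$, which is straightforward since the integrand is $O(u^{-\alpha})$ near $0$ and $O(u^{-1-\alpha})$ near $\infty$, both integrable for $\alpha\in(0,1)$. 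Once the integral is identified with $\frac{\pi}{\sin(\alpha\pi)}$ (via $\int_0^\infty \frac{u^{-\alpha}}{1+u}\,du = \frac{\pi}{\sin(\alpha\pi)}$) and the powers of $n$ are matched, the stated equivalence $\sim \frac{\mathbb{P}[\tau_x\geq g(n)]^{-1}\alpha\pi\lambda^{-\alpha}}{n\sin(\alpha\pi)}$ follows.
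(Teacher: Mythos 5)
Your approach is essentially the same as the paper's: compute the quenched Laplace transform of $T_x$ exactly via the geometric sum of exponentials, note that the Green's-function parameter satisfies $w(x,n)\,n^{b^\ast}\to 1$, average against the tail of $\tau_0$ restricted to $\{\tau_x\ge g(n)\}$, and evaluate a Beta-type integral to produce $\pi\alpha/\sin(\pi\alpha)$. Two small points. First, your explicit formula
$\mathbb{E}^x_\tau[e^{-\lambda_n T_x}] = w/(w+(1-w)\lambda_n\tau_x)$ has a spurious $(1-w)$: a sum of $G(x,n)+1$ i.i.d.\ exponentials of mean $\tau_x$ with $G(x,n)\sim\mathrm{Geom}(w)$ supported on $\{0,1,\dots\}$ is exponential of mean $\tau_x/w$, so the transform is $w/(w+\lambda_n\tau_x)$; this changes nothing asymptotically since $w=O(n^{-b^\ast})\to 0$, but the formula should be corrected. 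Second, your unease about the sign of the exponent on $\lambda$ is well founded: it is a typo in the lemma's statement and the answer should be $\lambda^{\alpha}$, not $\lambda^{-\alpha}$, exactly as your first substitution $u=\lambda_n t$ produces — do not switch to $u=1/(\lambda_n t)$. Indeed $\tilde\lambda_n^\alpha=\lambda^\alpha/n$, which is precisely what supplies the factor $\lambda^\alpha/n$ in front of $\varphi(n)^{-1}\pi\alpha/\sin(\pi\alpha)$, and it is what the Laplace transform $\exp(-t\lambda^\alpha)$ of the limiting $\alpha$-stable subordinator requires; the paper is internally inconsistent on this point (compare the display $\exp(-\frac{\pi\alpha}{\sin(\pi\alpha)}\beta_k^\alpha(u_k-u_{k-1}))$ later in the same section).
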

 \begin{dem}
   We must perform an auxiliary computation about the asymptotic behavior of the parameter $1-(q+p\Xi(x,n))$ of $G(x,n)$:

   $$(1-(q+p\Xi(x,n)))n^{b^{\ast}}=p\frac{1-r}{1-r^{n-x}}$$
  $$=\frac{2p(1+n^{-b^{\ast}})^{n-x}}{(1+n^{-b^{\ast}})((1+n^{-b^{\ast}})^{n-x}-(1-n^{-b^{\ast}})^{n-x})}$$
 $$=\frac{2p}{(1+n^{-b^{\ast}})(1-(1-\frac{2n^{-b^{\ast}}}{1+n^{-b^{\ast}}})^{n-x})}$$
which converges to $1$. Thus we have showed that
\begin{equation}\label{parametro}
 (1-(q+p\Xi(x,n)))n^{b^{\ast}}\stackrel{n\to\infty}{\to}1
\end{equation}

We have
  $$ \mathbb E^x_\tau[\exp(-\lambda_nT_x)]=\mathbb E^x_\tau\left[\exp\left(-\lambda_n\sum_{i=0}^{G(x,n)}\tau_x \tilde{e}_i\right)\right]$$
 where  $\tilde{e}_i $ are i.i.d. end exponentially distributed with
 $\mathbb{E}(\tilde{e}_i)=1$. Let $\tilde{\lambda}_n:=\frac{\lambda}{n^{1/\alpha}}$. Then
  $$ \mathbb E^x_\tau[\exp(-\lambda_nT_x)]=\frac{1}{1+\tilde{\lambda}_n\frac{\tau_x}{n^{b^\ast}(1-(q+p\Xi(x,n)))}}.$$
Using (\ref{parametro}) we get that the above expression equals
  $$=\frac{1}{1+\tilde{\lambda}_n\tau_x(1+o(1))}=\frac{1}{1+\tilde{\lambda}_n\tau_x}+o(n^{-1/\alpha}).$$
   Averaging with respect to the environment
  $$\mathbb E^x[1-\exp(-\lambda_nT_x)1_{\{\tau_x\geq g(n)\}}]=\int_{g(n)}^\infty 1-\frac{1}{1+{\tilde{\lambda}}_nz} \tau_0(dz) + o(n^{-1/\alpha})$$
  where the notation $\tau_0(dz)$ denotes integration with respect the distribution of $\tau_0$.
  Integrating by parts $\int_{g(n)}^\infty 1-\frac{1}{1+{\tilde{\lambda}}_nz} \tau_0(dz)$ we get that the above display equals
  $$\left[-\frac{\tilde{\lambda}_nz}{1+\tilde{\lambda_n}}\mathbb P[\tau_0\geq z]\right]_{g(n)}^{\infty}+\int_{g(n)}^{\infty}\frac{\tilde{\lambda}_n}{(1+\tilde{\lambda}_nz)^2}
\mathbb P[\tau_0\geq z] dz + o(n^{-1/\alpha}).$$

The first term is smaller than $C\tilde{\lambda}_n g(n)^{1-\alpha}=o(n^{-1})$. To estimate the second term, note that for all $\eta > 0$ we have
$$ (1-\eta)z^{-\alpha}\leq \mathbb P[\tau_0\geq z]\leq
(1+\eta)z^{-\alpha}$$
 for $z$ large enough. Then we must compute
$\int_{g(n)}^{\infty}\frac{\tilde{\lambda}_n}{(1+\tilde{\lambda}_nz)^2}z^{-\alpha}dz.$
Changing variables with  $y=\frac{\tilde{\lambda}_n z}{1+\tilde{\lambda}_n z}$ we obtain
$$\tilde{\lambda}_n^{-\alpha}\int_\frac{\tilde{\lambda}_n
g(n)}{1+\tilde{\lambda}_n g(n)}^1 y^{-\alpha}(1-y)^\alpha dy.$$
But we know that this integral converges to $\Gamma(\alpha+1)\Gamma(\alpha-1)=\frac{\pi\alpha}{\sin(\pi\alpha)}$.
 \end{dem}
 \subsection{Proof of Lemma \ref{maximum}}\label{proofmax}

We will show the convergence of the finite dimensional Laplace
transforms of the rescaled hitting times to the corresponding expression for an $\alpha$-stable
subordinator. This will prove finite dimensional convergence.

Let $0=u_0<\cdots<u_K\leq T$ and $\beta_i, i=1..K$ be positive numbers. We know that
$$\mathbb{E}\left[\exp\sum_{i=1}^{K}-\beta_i((\frac{\pi\alpha}{\sin(\pi\alpha)})^{-1/\alpha}V_\alpha(u_i)-(\frac{\pi\alpha}{\sin(\pi\alpha)})^{-1/\alpha}V_\alpha(u_{i-1}))\right]
=\exp\left(\sum_{i=1}^K -\frac{\alpha\pi\beta_K^{-\alpha}}{\sin(\alpha\pi)}(u_K-u_{K-1})\right).$$
So, it only suffices to show that
$$\mathbb{E}\left[\exp\sum_{i=1}^{K}-\beta_i(H^{(n)}(u_i)-H^{(n)}(u_{i-1}))\right]\stackrel{n\to\infty}{\to}\exp\left(\sum_{i=1}^K -\frac{\alpha\pi\beta_K^{-\alpha}}{\sin(\alpha\pi)}(u_K-u_{K-1})\right)$$
where $H^{(n)}$ is as in (\ref{hittigntimenormalizad}).
We can decompose the trajectory of $Y^{\epsilon(n)}$ up to $\zeta_n(\lfloor n u_K \rfloor)$ into three parts.
 The first one is the trajectory up to the time $\zeta_n(\lfloor n u_{K-1}-\nu (Tn) \rfloor)$,
 the second one is the
trajectory between times $\zeta_n(\lfloor n u_{K-1}-\nu (Tn)\rfloor)$ and
$\zeta_n(\lfloor n u_{K-1}\rfloor)$, finally, the third part
is the trajectory starting  from time $\zeta_n(\lfloor nu_{K-1}\rfloor)$
up to time $\zeta_n(\lfloor nu_{K}\rfloor)$.
First we will show that the time spent in the second part of the trajectory is negligible.
We have that $\mathbb P[\max_{y\in B_{\nu(Tn)}(x)}>g(Tn)]=o(1)$, which is to say that the probability of finding
 an $n$-deep trap in a ball of radius $\nu(Tn)$ is small. Indeed Lemma \ref{shallow} implies that there exists a constant $C>0$ such that
$$\mathbb P\left[\sum_{i=0}^{\zeta_{\lfloor u_Kn\rfloor}}\tau_{Y^{\epsilon(n)}_i}e_i1_{\left\{\tau_{Y^{\epsilon(n)}_i}\in B_{\nu(Tn)}(\lfloor u_{K-1}n\rfloor)\right\}}
<Cn^{\frac{1}{(1-a)\alpha}}(\log(n))^{-1}\right]
\rightarrow1.$$

 Hence, the time that the walk spends in $B_{\nu(Tn)}(\lfloor u_{K-1}n\rfloor)$
is negligible. But in $\cal A(Tn)$ the  walk never backtracks a distance
larger than $\nu(Tn)$, so, the time spent in the second part of the decomposition is negligible.
The fact that in $\cal A(Tn)$ the  walk never backtracks a distance
larger than $\nu(Tn)$ also implies that, conditional on $\cal A(Tn)$, the first and the third parts of the decomposition of the trajectory
corresponds to independent walks in independent environments.

 So $\mathbb E[\exp(\sum_{i=1}^{K}-\beta_i(H^{(n)}(u_i)-H^{(n)}(u_{i-1})))]$ can be expressed as
 $$ \mathbb E\left[\exp\sum_{i=1}^{K-1}-\beta_i(H^{(n)}(u_i)-H^{(n)}(u_{i-1}))\right]\mathbb E^{\lfloor nu_{K-1}\rfloor}\left[\exp-\beta_K(H^{(n)}(u_i)-H^{(n)}(u_{i-1}))\right]+o(1) $$
 where $o(1)$ is taking account of the time spent in the second part of the decomposition of the trajectory and of $\cal A(Tn)^c$.

The strong Markov property of $Y^{\epsilon(n)}$ applied at the stopping time $\zeta_n(\lfloor nu_{K-1}\rfloor)$ and translational invariance of the environment give that $H^{n}_{b^{\ast}}(n u_i)-H^{n}_{b^{\ast}}(n u_{i-1})$ is distributed as $H^{n}_{b^{\ast}}(ns_n(K))$
  where $s_n(K)= \frac{\lfloor u_K n\rfloor-\lceil u_{K-1} n\rceil}{n}$.
Iterating this procedure $K-2$ times we reduce the problem to the computation of one-dimensional Laplace transforms.
Hence, we have to prove that, for each $k\leq K$
$$\mathbb E[ \exp(-\beta_k n^{-(1/\alpha)-a}H^n_{b^{\ast}}(ns_n(k)))]\rightarrow\exp\left(-\frac{\pi\alpha}{\sin(\pi\alpha)}\beta_k^\alpha(u_k-u_{k-1})\right).$$

We have that $\mathbb{P}[\cal E(Tn)\cap A(Tn)]\to1$, then we can write
$$\mathbb E[\exp(-\beta_kn^{-(1/\alpha)-a}H^n_{b^{\ast}}(ns_n(k)))]=
\mathbb E[\exp(-\beta_kn^{-(1/\alpha)-a}H^n_{b^{\ast}}(ns_n(k)))1_{\{\cal E(Tn)\cup A(Tn)\}}]+o(1).$$
We know that the time spent in the shallow traps is negligible, so we only have to take into account the deep traps.
We also know that on $A(Tn)$, the walk does not backtrack more than $\nu(Tn)$, and
that, on $\cal E(Tn)$, the deep traps on $[0,Tn]$ are well separated. Then we can write
$$\mathbb E[ \exp(-\beta_k n^{-(1/\alpha)-a}H^n_{b^{\ast}}(ns_n(k)))]=\mathbb E\left[\prod_{j=1}^{\theta(ns_n(k))}\mathbb E_{\tau}^{\delta_i}[\exp{(-\beta_kn^{-(1/\alpha)-a}T_{\delta_i})}]\right]+o(1).$$

Also, in  $\cal E(Tn)$ we have upper and lower bounds for $\theta (Tn)$. Using
the upper bound we see that the righthand side of the above equality
is bounded above by
$$ \mathbb E \left[\prod_{j=1}^{ns_n(k) \varphi (ns_n(k))(1-\frac{1}{\log(ns_n(k))})}\mathbb E_{\tau}^{\delta_i}[\exp{(-\beta_kn^{-(1/\alpha)-a}T_{\delta_i})}]\right]+o(1),$$

 Applying again the translational invariance of the environment and the strong Markov property we get that that the above display is equal to
 $$\mathbb{E}[\mathbb {E}_\tau^{\delta_i}[\exp{(-\beta_kn^{-(1/\alpha)-a}T_{\delta_i})}]]^{ns_n(k) \varphi (ns_n(k))(1-\frac{1}{\log(ns_n(k))})}+o(1)$$
which in turn can be expressed as
 $$\mathbb{E}[\exp{(-\beta_kn^{-(1/\alpha)-a}T_0)}|\tau_0\geq g(ns_n(k))]^{ns_n(k) \varphi (ns_n(k))(1-\frac{1}{\log(ns_n(k))})}+o(1).$$
 Using lemma (\ref{deeptraps}) and the fact that $s_n(k)\stackrel{n}{\to}u_k-u_{k-1}$ we obtain
 $$ \limsup\mathbb E[\exp(-\beta_kn^{-(1/\alpha)-a}H^n_{b^{\ast}}(ns_n(k)))] \leq \exp\left( -\frac{\alpha\pi\beta_k^{-\alpha}}{\sin(\alpha\pi)}(u_k-u_{k-1})\right) .$$

 The lower bound can be obtained in an analogous fashion.
For the tightness, the arguments are the same as in Chapter 5 of \cite{p-spin}
\section{\textbf{The Critical Case}}\label{critical}
 We want to show that for $a=\frac{\alpha}{\alpha+1}$ the sequence of walks $(X^{(n,a)}(t);t\in[0,\infty])$ converges in distribution to a drifted F.I.N. diffusion.
 We will mimic the arguments in \cite{fin}. But to treat the asymmetry of the model we will use a Brownian motion with drift instead of a Brownian motion. We use the existence of a bi-continuous version of the local time for a Brownian motion with drift.
 \subsection{The construction of the walks}
 Recall the definition of $\tilde{X}(\mu)$ given in
display (\ref{timechange2}).
Let $s$ be a real number and define
$$\mu:=\sum_{i\in \mathbb(Z)} v_i\delta_{si}.$$

Then $\tilde{X}(\mu)$ is a homogeneus Markov process with $s\mathbb{Z}$ as its state space.
 The transition probabilities and jump rates of $\tilde{X}(\mu)$ can be computed from the positions and weights of the atoms using the generator $L$ of $C(t)$
\begin{equation} \label{generator2}
Lf:=\frac{1}{2}\frac{d^2f}{d x^2}+\frac{df}{d x}.
\end{equation}

The arguments we will give below are an adaptation of the reasoning used by Stone in \cite{stone}.
For each $i$ let $\eta_{si}$ be the time of the first jump of $\tilde{X}(\mu)$ started at $si$.
By construction we will have that $\eta_{si}=v_i\tilde{l}(\sigma_s,0)$, where $\sigma_s$ is the hitting time of $(-s,s)$ by $C(t)$.
Using the strong Markov property for $C(t)$ we can deduce that $\eta_{si}$ is exponentially distributed.
It is easy to see that its mean is $v_i\mathbb{E}[\tilde l(\sigma_s,0)]$.
Denote by $p_t(x)$ the density at site $x$ of the distribution of $C(t)$ absorbed at $\{-s,s\}$.
Using that $\tilde{l}(\sigma_s,0):=\epsilon^{-1}\lim_{\epsilon\to0}m(t\in[0,\sigma_s]:C(t)\in[-\epsilon,\epsilon])$ and applying Fubini`s Theorem we find that
$\mathbb{E}[\tilde{l}(\sigma_s,0)]=\epsilon^{-1}\lim_{\epsilon\to0}\int_0^{\sigma_s}\mathbb{P}[C(t)\in[-\epsilon,\epsilon]]dt$.
 Then we find that
$$\mathbb{E}[\tilde{l}(\sigma_s,0)]=\int_0^{\infty}p_t(0)dt.$$

 We also know that $\int_0^{\infty}p_t(0)dt=f(0),$
where $f$ is the Green function of (\ref{generator2}) with Dirichlet conditions on $\{-s,s\}$.
That is, $f$ is the continuous function that satisfies
 $$\frac{1}{2}\frac{d^2 f}{d x^2}+\frac{d f}{d x}=-\delta_0 \textrm {           and              } f(s)=f(-s)=0.$$
We know that the general solution to $\frac{1}{2}\frac{d^2 g}{d x^2}+\frac{d g}{d x}=0$ is
$g=C_1\exp(-2x) + C_2.$
This and the constraints on $f$ give that
\begin{equation}\label{rates}
 \mathbb{E}[\eta_{si}]=v_i^{-1}\frac{\exp(-2s)+1}{1-\exp(-2s)}.
\end{equation}

For the computation of the respective transition probabilities we can use again the generator $L$.
 Let $g:[-s,s]\rightarrow\mathbb{R}$ be a continuous function such that
 $\frac{1}{2}\frac{d^2 g}{d x^2}+\frac{d g}{d x}=0$
and $g(-s)=0, g(s)=1$.
Using It\={o}'s formula, we find that that $g(C(t))$ is a martingale.
By the optional stopping theorem with the stopping time $\sigma_s$ we find that the probability that the walk takes his first step to the right is $g(0)$.
We can use the constraints on $g$ to see that
\begin{equation} \label{transitionprobabilities}
 \mathbb{P} [ \tilde{X}(\mu)(\eta_{si})=s(i+1)] = \frac{\exp(2s)}{1+\exp(2s)}.
\end{equation}

The proof of part $(ii)$ of Theorem (\ref{main}) will rely strongly on the
following proposition.
\begin{prop}\label{stonedrift}
Let $(\nu_n)_{n\in \mathbb N }$ be a sequence of measures that converges
vaguely to $\nu$, a measure whose support is $\mathbb{R}$.  Then the corresponding processes
$(\tilde{X}(\nu_n)(t),0\leq t\leq T)$ converges to $(\tilde{X}(\nu)(t),0\leq t\leq T)$ in distribution in $(D[0,T],U)$.
\end{prop}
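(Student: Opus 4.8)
The plan is to exploit the fact that $\tilde{X}(\nu_n)$ and $\tilde{X}(\nu)$ are all built from a \emph{single} driving process $C(t)=B(t)+t$ and its local time $\tilde{l}(t,y)$, so that the statement is really a deterministic continuity assertion about the chain of maps $\mu\mapsto\tilde{\phi}_\mu\mapsto\tilde{\psi}_\mu\mapsto C\circ\tilde{\psi}_\mu$, valid for $B$-almost every path; almost sure (indeed pathwise uniform) convergence then yields the claimed convergence in distribution, and in the intended application, where $\nu_n$ is random but independent of $B$, one conditions on $B$ and uses bounded convergence. Accordingly, I would first fix a realization of $B$ in the full-measure event $\Omega_0$ on which $\tilde{l}$ is jointly continuous, $\{y:\tilde{l}(s,y)>0\}$ equals the open interval $(\min_{[0,s]}C,\max_{[0,s]}C)$ for every $s>0$, and $\tilde{\phi}_\nu(s)\to\infty$ as $s\to\infty$; the last property has to be checked for the measure $\nu$ at hand (for $\nu=\rho$ it holds because the heavy-tailed weights force $\sum_i v_i\,\tilde{l}(\infty,x_i)=\infty$), and I would isolate it as a short lemma, since it is exactly what makes $\tilde{X}(\nu)$ a well-defined element of $D[0,T]$.

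The first substantive step is to upgrade the vague convergence $\nu_n\to\nu$ to locally uniform convergence $\tilde{\phi}_{\nu_n}\to\tilde{\phi}_\nu$. Fix $S>0$. The family $\{\,y\mapsto\tilde{l}(s,y):s\in[0,S]\,\}$ is a compact subset of $C_c(\mathbb{R})$ in the sup-norm: joint continuity of $\tilde{l}$ makes $s\mapsto\tilde{l}(s,\cdot)$ continuous into $C_c(\mathbb{R})$, and all these functions are supported in the fixed compact interval $[\min_{[0,S]}C,\max_{[0,S]}C]$. Comparison of $\nu_n$ with a continuous compactly supported majorant shows $\sup_n\nu_n(K)<\infty$ for every compact $K$, whence the functionals $f\mapsto\int f\,d\nu_n$ are equi-Lipschitz in sup-norm on that compact family, uniformly in $n$; together with their pointwise convergence there this gives $\sup_{s\in[0,S]}|\tilde{\phi}_{\nu_n}(s)-\tilde{\phi}_\nu(s)|\to0$. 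Dominated convergence and joint continuity of $\tilde{l}$ also give that $\tilde{\phi}_\nu$ is continuous, and it is strictly increasing on $[0,\infty)$ because for $s_1<s_2$ the nonnegative continuous function $y\mapsto\tilde{l}(s_2,y)-\tilde{l}(s_1,y)$ is strictly positive near $C(s_2)$ while $\nu$ charges every open set — this is precisely where the full-support hypothesis enters.

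It then remains to invert and compose. Since $\tilde{\phi}_\nu$ is a continuous strictly increasing bijection of $[0,\infty)$ onto $[0,\infty)$ (using $\tilde{\phi}_\nu(\infty)=\infty$ from $\Omega_0$), its inverse $\tilde{\psi}_\nu$ is continuous and strictly increasing; given $T$, pick $S$ with $\tilde{\phi}_\nu(S)>T$. The elementary fact that uniform convergence of functions toward a continuous strictly increasing limit forces uniform convergence of the (right-continuous generalized) inverses on any $[0,T']$ lying below the range then yields $\tilde{\psi}_{\nu_n}\to\tilde{\psi}_\nu$ uniformly on $[0,T]$, with $\tilde{\psi}_{\nu_n}(T)\le S$ for $n$ large. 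Finally, all these inverses take values in the fixed compact $[0,S]$ on which $C$ is uniformly continuous, so $\sup_{t\in[0,T]}|\tilde{X}(\nu_n)(t)-\tilde{X}(\nu)(t)|=\sup_{t\in[0,T]}|C(\tilde{\psi}_{\nu_n}(t))-C(\tilde{\psi}_\nu(t))|\to0$, i.e. $\tilde{X}(\nu_n)\to\tilde{X}(\nu)$ in $(D[0,T],U)$, pathwise and hence in distribution.

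The main obstacle is the passage from vague convergence of the $\nu_n$ to uniform convergence of $\tilde{\phi}_{\nu_n}$: one needs the a priori bound $\sup_n\nu_n(K)<\infty$ on compacts (which vague convergence does not supply directly but follows via a continuous compactly supported majorant) together with the compactness of the test family $\{\tilde{l}(s,\cdot):s\le S\}$, both resting on the good pathwise regularity of the local time of $C$; everything after that is a soft continuity-of-inversion argument. The one auxiliary point that genuinely uses the structure of the problem is verifying $\tilde{\phi}_\nu(\infty)=\infty$, which combines the full-support assumption with, in the concrete case, the heavy tails of the $v_i$.
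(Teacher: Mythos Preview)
Your proof follows essentially the same route as the paper's: fix a good path of $C$, use compact support and continuity of $\tilde l(t,\cdot)$ to pass from vague convergence of $\nu_n$ to pointwise convergence of $\tilde\phi_{\nu_n}$, upgrade to locally uniform convergence, use the full-support hypothesis to get strict increase of $\tilde\phi_\nu$, invert (the paper cites Corollary~13.6.4 of \cite{witt} here), and compose with the continuous path $C$. The only differences are minor: the paper upgrades pointwise to uniform convergence via the simpler observation that the $\tilde\phi_{\nu_n}$ are nondecreasing with continuous limit (a P\'olya-type argument), rather than your equicontinuity/compact-test-family route, and the paper does not explicitly isolate the point $\tilde\phi_\nu(\infty)=\infty$ that you flag.
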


 For the case where the underlying process is a Brownian motion, the proof of this fact can be found in \cite{stone}.
 We will use the continuity properties for the local time $\tilde{l}$. For each fixed $t$, $\tilde{l}$ is continuous and of compact support in $x$.
 Then, the vague convergence of $\nu_n$ to $\nu$ implies the almost sure convergence of $\tilde{\phi}_{\nu_n}(t)$ to $\tilde{\phi}_{\nu}(t)$.
 As $\tilde{l}$ is continuous in $t$, we obtain continuity of $\tilde{\phi}_{\nu_n}$ and of $\tilde{\phi}_{\nu}$.
 That, plus the fact that the $\tilde{\phi}_{\nu_n}$ are non-decreasing implies that that $\tilde{\phi}_{\nu_n}$ converges uniformly to $\tilde{\phi}_{\nu}$.
 The function $\tilde{\phi}_{\nu}$ is almost surely strictly increasing, because the support of $\nu$ is $\mathbb{R}$. Now we can apply corollary 13.6.4 of \cite{witt} to obtain that $\tilde{\psi}_{\nu_n}$ converges uniformly to $\tilde{\psi}_{\nu}$. That plus the continuity of the Brownian paths yields the lemma.
\subsection{The coupled walks}
To prove part (ii) of Theorem \ref{main}, we will use Proposition \ref{stonedrift}. That is we want to show that each walk $(X^{(n,a)}(t);t\in[0,\infty])$ can be expressed as
a speed measure change of $C(t)$, and then use convergence of the measures to get convergence of the
processes.
 The problem is that we are dealing with a sequence of random measures, and the proposition deals only with deterministic measures.
 To overcome this obstacle we can construct a coupled sequence of random measures $(\rho_n)_{n\in\mathbb{N}}$, such that
$(\tilde{X}(\rho_n)(t);t\in[0,\infty])$ is distributed as $(X^{(n,a)}(t);t\in[0,\infty])$ and that $(\rho_n)_{n\in\mathbb{N}}$ converges
almost surely vaguely to $\rho$, where $\rho$ is the random measure defined in (\ref{rho}) such that $\tilde{Z}=\tilde{X}[\rho]$. This section is devoted to the construction of the coupled measures.

We recall that $V_{\alpha }$ is an $\alpha$-stable subordinator.  
To make the construction  clearer, we will first suppose that $\tau_0$ is equidistributed with the positive $\alpha$-stable distribution $V_{\alpha}(1)$.
 Let us consider the strictly increasing process $(\tilde{V}_{\rho}(t); t\in \mathbb{R})$
given by $\tilde{V}_{\rho}(t):=\rho [0,t]$ if $t \geq 0$ and
$\tilde{V}_{\rho}(t):=-\rho[t,0)$ if $t<0$. It is a known fact from the theory of Levy processes that $\tilde{V}_{\rho}(t)$ is a two sided $\alpha$-stable subordinator.
 We now use this process
to construct the coupled sequence of random measures $(\rho_n)_{n\in\mathbb{N}}$ as
$$\rho_n:=\sum_i n^{-1/(1+\alpha)}\tau_i^n\delta_{s_ni},$$
 where
 $s_n:=\frac{1}{2}\log{\frac{n^{-a}+1}{1-n^{-a}}} $
 and
\begin{equation}\label{tauene}
\tau_i^n:=n^{1/(1+\alpha)}(\tilde{V}_{\rho}(n^{-\alpha/(1+\alpha)}(i+1))-\tilde{V}_{\rho}(n^{-\alpha/(1+\alpha)}i)).
\end{equation}

 Observe that $(\tau_i^n)_{i \in \mathbb Z}$ is an i.i.d. sequence distributed
 like $\tau_0$, so that using (\ref{transitionprobabilities}) and (\ref{rates}) we see that $\tilde{X}(\rho_n)$
 is a walk with drift $n^{-1/\alpha}$ taking values in $s_n\mathbb {Z}$. The
 latter means that $\tilde X(\rho_n)$ is distributed like  $s_n n^{\frac{\alpha}{1+\alpha}}X^{(n,a)}$.
 The key observation here is that the scaling factor $s_n$ satisfies
 \begin{equation}\label{sn}
   s_nn^{\alpha/(1+\alpha)}\rightarrow 1 \textrm{           as               } n\rightarrow \infty.
 \end{equation}

So, we just have to show that $\tilde X(\rho_n)$ converges to
$\tilde X(\rho)$, because (\ref{sn}) implies that if $\tilde X(\rho_n)$
converges to $\tilde X(\rho)$, so $s_nn^{\alpha/(1+\alpha)}\tilde{X}(\rho_n)$ does.
  With (\ref{sn}) in mind it is easy to prove that the sequence of measures
  $(\rho_n)$ converges almost surely vaguely to $\rho$. Suppose that $a<b$ are real numbers and that $V_\rho$ is continuous at $a$ and $b$, then
$$\rho_n((a,b])=V_{\rho}(n^{-\alpha/(1+\alpha)}\lfloor a/s_n\rfloor)-V_{\rho}(n^{-\alpha/(1+\alpha)}(\lfloor b/s_n\rfloor+1)).$$
But using (\ref{sn}) it is clear that $n^{-\alpha/(1+\alpha)}\lfloor a/s_n\rfloor\stackrel{n\to\infty}{\to}a$ and $n^{-\alpha/(1+\alpha)}\lfloor b/s_n\rfloor\stackrel{n\to\infty}{\to}b$. Then the continuity of $V_\rho$ at $a$ and $b$ implies that $\rho_n((a,b])\stackrel{n\to\infty}{\to}\rho(a,b]$, and we have proves the vague convergence of $\rho_n$ to $\rho$.

Suppose now that $\tau_0$  is not a positive $\alpha$-stable random
variable. Then, we can follow Section 3 of \cite{fin}.
There they construct constants $c_\epsilon$ and functions $g_{\epsilon}$ such that $\tau_i^{(\epsilon)}$ is distributed like $\tau_0$, where
\begin{equation}\label{tauepsilon}
\tau_i^{(\epsilon)}:=c_{\epsilon}^{-1}g_\epsilon(\tilde{V}_{\rho}(\epsilon(i+1)) -\tilde{V}_{\rho}(\epsilon i)).
\end{equation}
Lemma 3.1 of \cite{fin} says that
\begin{equation}\label{gepsilon}
 g_{\epsilon}(y)\rightarrow y \textrm{             as                } \epsilon \rightarrow 0.
\end{equation}
As $\tau_0$ satisfies (\ref{tail}) and using the construction of $c_{\epsilon}$ in Section 3 of \cite{fin}, we can deduce that
\begin{equation}\label{cepsilon}
 c_{\epsilon}\sim\epsilon^{1/\alpha}
\end{equation}

 Define
\begin{equation}\label{tauiene}
 \tau_i^n:=\tau_i^{(n^{(-\alpha/(1+\alpha))})}
\end{equation}
and again
$$\rho_n:=\sum_i n^{-1/(1+\alpha)}\tau_i^n\delta_{s_ni}.$$

Then, by definition (\ref{tauepsilon}), $\tilde{X}(\rho_n)$
 is a walk with drift $n^{-1/\alpha}$ taking values in $s_n\mathbb {Z}$.
Using (\ref{gepsilon}), (\ref{cepsilon}) and (\ref{sn}) we can see that $\mathbb{P}$-a.s.
$\rho_n \rightarrow \rho \textrm{                              vaguely}.$

\section{\textbf{The Subcritical Regime}}\label{sub}
We will prove that if $a >\alpha/(1+\alpha)$, then $(X^{(n,a)};t\in[0,\infty])$ converges to a F.I.N. diffusion. We obtain the same scaling limit that was obtained in \cite{fin} for a symmetric B.T.M. Nevertheless, here we have to deal with walks which are not symmetric, in contrast with the situation of \cite{fin}. For this purpose we express each rescaled walk as a time scale change of a Brownian motion. The scale change is necessary to treat the asymmetry of the walk. Then we show that the scale change can be neglected.
 We now proceed to define a time scale change of a Brownian motion.
 Let $\mu$ be a locally finite
discrete measure
$$\mu(dx):=\sum_{i\in\mathbb{Z}}w_i\delta_{y_i}(dx),$$
where $(y_i)_{i\in\mathbb Z}$ is an ordered
sequence of real numbers so that $y_i<y_j \text{    i.i.f.    } i<j$.

Let $S:\mathbb{R}\to\mathbb{R}\cup\{\infty,-\infty\}$ be a real valued, strictly increasing function, 
 $\mu$ will be the speed-measure and $S$  the scaling
function of the time scale change of Brownian motion. Define the scaled measure $(S\circ\mu)(dx)$ as
$$(S\circ\mu)(dx):=\sum_iw_i\delta_{S(y_i)}(dx).$$

Let
 $$\phi(\mu,S)(t):=\int_{\mathbb{R}}l(t,y)(S\circ\mu)(dy)$$
and $\psi(\mu,S)(s)$ be the right continuous generalized inverse of $\psi(\mu,S)$. Then, as shown in \cite{stone}
$$X(\mu,S)(t):=S^{-1}(X(S\circ\mu)(t)))$$
is a continuous time random walk with $\{y_i\}$ as its state space. The mean of the exponentially
distributed waiting time of $X(S\circ\mu)$ on $y_i$ is
\begin{equation}\label{meanofthetime}
2w_i\frac{(S(y_{i+1})-S(y_i))(S(y_{i})-S(y_{i-1}))}{S(y_{i+1})-S(y_{i-1})}
\end{equation}
and the transition probabilities to the right and to the left respectively
are
\begin{equation}\label{probabilidadesdetransicion}
 \frac{S(y_{i+1})-S(y_i)}{S(y_{i+1})-S(y_{i-1})}\text{                   and                     }\frac{S(y_i)-S(y_{i-1})}{S(y_{i+1})-S(y_{i-1})}.
\end{equation}
As in the previous section, we need to define a sequence of measures
$(\nu_n)_{n\in\mathbb{Z}}$ converging almost surely vaguely to $\rho$,
and which can be used to express the sequence of rescaled walks $X^{(n,a)}$.

 Let
$$\nu_n:=\sum_{i\in\mathbb{Z}}\frac{r_n+1}{2r_n^i}\tau_i^n\delta_{in^{\alpha/(\alpha+1)}},$$
where $\tau_i^n $ are defined in display (\ref{tauiene}), and
$r_n:=1-\frac{2n^{-a}}{1+n^{-a}}.$
 We will also use a sequence of scaling functions $S^n$ (which will converge
 to the identity mapping) given by
$$S^n(in^{\alpha/(\alpha+1)}):=\sum_{j=0}^{i-1}\frac{r^j}{n^{\alpha/(\alpha+1)}}.$$

We extend the domain of definition of $S^{n}$ to $\mathbb{R}$ by linear interpolation.
Then, by (\ref{meanofthetime}) and (\ref{probabilidadesdetransicion}), we have that $X(\nu_n,S^n)$ is distributed like $X^{(n,a)}$. We will use the following theorem proved by Stone in \cite{stone}.
\begin{prop}\label{stone}
Let $(\nu_n)_{n\in \mathbb N }$ be a sequence of measures that converges vaguely to $\nu$. Then the corresponding processes
$(X(\nu_n)(t),0\leq t\leq T)$ converges to $(X(\nu)(t),0\leq t\leq T)$ in distribution in $(D[0,T],J_1)$
\end{prop}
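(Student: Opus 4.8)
Proposition \ref{stone} asserts that if $\nu_n \to \nu$ vaguely then $X(\nu_n) \Rightarrow X(\nu)$ in $(D[0,T],J_1)$, where $X(\mu) = B(\psi_\mu(\cdot))$ is the speed-measure change of a standard Brownian motion. This is quoted from Stone, so the plan is only to indicate how one would reconstruct the proof.

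The plan is to work on a single probability space carrying the Brownian motion $B$ with its bi-continuous local time $l(t,y)$, and to deduce the convergence of the time-changed processes from the convergence of the additive functionals $\phi_{\nu_n}(t) = \int_{\mathbb{R}} l(t,y)\,\nu_n(dy)$. First I would fix a realization of $B$. For each fixed $t$, the function $y \mapsto l(t,y)$ is continuous and has compact support (it vanishes outside the range of $B$ on $[0,t]$), so vague convergence $\nu_n \to \nu$ gives $\phi_{\nu_n}(t) \to \phi_\nu(t)$ for every $t$. Since each $\phi_{\nu_n}$ is nondecreasing and continuous (continuity in $t$ comes from the joint continuity of $l$), and the limit $\phi_\nu$ is continuous, the pointwise convergence upgrades to locally uniform convergence on $[0,\infty)$ by the standard Dini-type argument for monotone functions with continuous limit.

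Next I would pass to the generalized inverses $\psi_{\nu_n} = \phi_{\nu_n}^{-1}$ and $\psi_\nu = \phi_\nu^{-1}$. Here there is a genuine subtlety, which I expect to be the main obstacle: $\phi_\nu$ need not be strictly increasing (it is constant on time-intervals during which $B$ sits in a $\nu$-null region), so $\psi_\nu$ may have jumps, and the inversion map is only continuous at strictly increasing arguments in the uniform topology. The resolution is that one should use the $M_1$ (or even $J_1$) inversion statement rather than the uniform one: the inversion map $D^\uparrow \to D^\uparrow$ is continuous at functions that are not constant on any interval adjacent to a level where the inverse jumps — equivalently, $\phi_\nu$ is strictly increasing at its "active" times. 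One checks that $\phi_\nu$ is a.s. strictly increasing away from the (measure-zero in the Lebesgue sense but dense) excursion intervals in a way compatible with the continuity criterion for inversion (Whitt, Corollary 13.6.4, as used elsewhere in this paper). This gives $\psi_{\nu_n} \to \psi_\nu$ in $(D[0,T],M_1)$ for a.e. realization of $B$, with the convergence being locally uniform at continuity points of $\psi_\nu$.

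Finally I would compose with $B$. Since $B$ has continuous paths, the map $f \mapsto B \circ f$ is continuous from $(D^\uparrow[0,T],M_1)$ into $(D[0,T],J_1)$ — at a jump of $\psi_\nu$ the composition $B(\psi_{\nu_n}(\cdot))$ makes a jump from $B(\psi_\nu(t-))$ to $B(\psi_\nu(t))$ and nearby $\psi_{\nu_n}$ produce a nearby jump of nearby size, which is exactly $J_1$ convergence. Hence $X(\nu_n)(t) = B(\psi_{\nu_n}(t)) \to B(\psi_\nu(t)) = X(\nu)(t)$ in $(D[0,T],J_1)$ for a.e. fixed Brownian path, and therefore the unconditional laws converge; bounded convergence then yields convergence in distribution. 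The only places requiring care are the uniform (not merely pointwise) convergence of the $\phi_{\nu_n}$ — handled by monotonicity plus continuity of the limit — and the behavior of the inversion and composition maps at the jump times of $\psi_\nu$, which is precisely why the target topology is $J_1$ and not $U$; everything else is a routine application of Whitt's continuity theorems for the inverse and composition maps together with the joint continuity of Brownian local time. For the detailed verification of these continuity points one may simply cite \cite{stone}.
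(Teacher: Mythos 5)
The paper does not actually prove Proposition \ref{stone}: it is quoted from \cite{stone} (``We will use the following theorem proved by Stone''), so there is no in-paper proof to compare against. The paper does, however, prove the cognate Proposition \ref{stonedrift} (the drifted analogue), and your first two paragraphs reproduce that argument faithfully: fix the Brownian path, use that $y\mapsto l(t,y)$ is continuous with compact support to get $\phi_{\nu_n}(t)\to\phi_\nu(t)$, upgrade to locally uniform convergence of the $\phi$'s by monotonicity plus continuity of the limit, invert, and compose with $B$. That part is sound. The crucial difference is that Proposition \ref{stonedrift} explicitly assumes $\mathrm{supp}(\nu)=\mathbb{R}$, so $\phi_\nu$ is strictly increasing, $\psi_\nu$ is continuous, the limit process has continuous paths, and the inversion step falls squarely under Corollary 13.6.4 of \cite{witt}.

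Proposition \ref{stone} omits that hypothesis, and the third paragraph of your sketch tries to fill the gap by asserting that $f\mapsto B\circ f$ is continuous from $(D^{\uparrow}[0,T],M_1)$ into $(D[0,T],J_1)$. That claim is false, and the failure sits exactly at the point you yourself flag as the main obstacle. If $\psi_\nu$ jumps from $a$ to $b$ at time $s_0$, the coupled prelimits $\psi_{\nu_n}$ do not make a single jump there: $\phi_{\nu_n}$ picks up a little local time from the (small but nonzero) $\nu_n$-mass inside the excursion interval, so $\psi_{\nu_n}$ climbs through $(a,b)$ and $B\circ\psi_{\nu_n}$ traces out a discretization of the entire Brownian excursion $B([a,b])$ compressed into a vanishing time window, rather than jumping from $B(a)$ to $B(b)$. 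In $J_1$ one cannot reparametrize away visits to intermediate values. Concretely, take $\nu=\delta_0+\delta_1$ and $\nu_n=\nu+n^{-1}\delta_{1/2}$; then $\nu_n\to\nu$ vaguely, but $X(\nu)$ is a two-state chain on $\{0,1\}$ while $X(\nu_n)$ is a three-state chain that visits $1/2$ on every transition, so the $J_1$ distance is bounded below by $1/2$ and no convergence holds. Thus the proposition as you are trying to prove it really does require a hypothesis making $\psi_\nu$ continuous (e.g.\ full support of $\nu$, as in \ref{stonedrift}); that hypothesis is satisfied in the paper's only application (the limit measure $\rho$ has dense atoms, so $X(\rho)$ has continuous paths and $J_1$, $M_1$, $U$ coincide on the limit), and under it your first two paragraphs already constitute the whole argument. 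As written, the general composition claim in your last paragraph does not hold, and cannot be repaired without adding that assumption.
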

The proof of part (iii) of theorem \ref{main} will rely in the following lemma.
Let $id$ denote the identity mapping on $\mathbb{R}$, then we have that
\begin{lem}\label{sandrho}
$S^n(n^{-\alpha/(1+\alpha)}\lfloor n^{\alpha/(\alpha+1)}\cdot\rfloor)$ converges uniformly on compacts to $id$ and $\nu_n$ to converges almost surely vaguely to $\rho$.
\end{lem}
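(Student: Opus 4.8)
The plan is to treat the two assertions separately, as they are essentially independent. For the convergence of the scaling functions, first I would note that $S^n$ is piecewise linear with $S^n(in^{\alpha/(\alpha+1)})=n^{-\alpha/(\alpha+1)}\sum_{j=0}^{i-1}r_n^j$ (with the obvious modification for $i<0$), so it suffices to control $S^n(n^{-\alpha/(1+\alpha)}\lfloor n^{\alpha/(\alpha+1)}x\rfloor)$ at the lattice points $x=in^{-\alpha/(\alpha+1)}$ and then use that linear interpolation preserves uniform-on-compacts convergence. For $x$ in a fixed compact $[-K,K]$, the relevant index $i=i(n)$ satisfies $|i|\le Kn^{\alpha/(\alpha+1)}+1$. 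Since $r_n=1-2n^{-a}/(1+n^{-a})$ with $a>\alpha/(1+\alpha)$, we have $r_n=1-O(n^{-a})$ and $\log r_n=-O(n^{-a})$, so $r_n^i=\exp(i\log r_n)=1+O(|i|\,n^{-a})=1+O(n^{\alpha/(\alpha+1)-a})\to 1$ uniformly over such $i$. Hence $n^{-\alpha/(\alpha+1)}\sum_{j=0}^{i-1}r_n^j=in^{-\alpha/(\alpha+1)}(1+o(1))\to x$ uniformly on $[-K,K]$, which gives the first claim.

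For the vague convergence $\nu_n\to\rho$, the argument parallels the one already carried out in Section \ref{critical} for $\rho_n$. Recall $\nu_n=\sum_i \tfrac{r_n+1}{2r_n^i}\tau_i^n\,\delta_{in^{\alpha/(\alpha+1)}}$, where $\tau_i^n=\tau_i^{(n^{-\alpha/(1+\alpha)})}$ is built from increments of the two-sided $\alpha$-stable subordinator $\tilde V_\rho$ via (\ref{tauepsilon}), (\ref{tauiene}). I would first observe that the weight prefactor $\tfrac{r_n+1}{2r_n^i}$ tends to $1$ uniformly over the indices $i$ that contribute mass to any fixed interval $(a,b]$: these satisfy $|i|\le Cn^{\alpha/(\alpha+1)}$, and by the estimate above $r_n^i\to1$, while $\tfrac{r_n+1}{2}\to1$. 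Thus, up to a negligible multiplicative error, $\nu_n$ has the same atoms and weights as the measure $\sum_i n^{-1/(1+\alpha)}\tilde\tau_i^n\,\delta_{in^{\alpha/(\alpha+1)}}$ obtained by rewriting $\tau_i^n$ through (\ref{tauepsilon}): indeed $c_{n^{-\alpha/(1+\alpha)}}^{-1}\sim n^{1/(1+\alpha)}$ by (\ref{cepsilon}), and $g_{n^{-\alpha/(1+\alpha)}}(y)\to y$ by (\ref{gepsilon}). Putting these together, for $a<b$ continuity points of $V_\rho$,
$$\nu_n\big((a,b]\big)=(1+o(1))\sum_{i: in^{\alpha/(\alpha+1)}\in(a,b]} n^{-1/(1+\alpha)}c_{n^{-\alpha/(1+\alpha)}}^{-1}g_{n^{-\alpha/(1+\alpha)}}\!\big(\tilde V_\rho(n^{-\alpha/(1+\alpha)}(i+1))-\tilde V_\rho(n^{-\alpha/(1+\alpha)}i)\big),$$
and the right-hand side telescopes (modulo the $g_\epsilon$-correction) to $V_\rho(b)-V_\rho(a)=\rho((a,b])$ almost surely, exactly as in the critical case.

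The main obstacle, and the point requiring the most care, is controlling the $g_\epsilon$-nonlinearity inside the telescoping sum simultaneously for all the increments contributing to $(a,b]$: (\ref{gepsilon}) is a pointwise-in-$y$ statement, but here one sums $O(n^{\alpha/(\alpha+1)})$ terms of random size, including possibly one or a few macroscopic jumps of $\tilde V_\rho$. I would handle this by splitting each dyadic block into its "large" increments (those exceeding a threshold $\eta$) and "small" increments: the large ones are finitely many and for each of them $c_\epsilon^{-1}g_\epsilon(\cdot)\to(\cdot)$ by (\ref{gepsilon}) and (\ref{cepsilon}) applied to a fixed value, while the small ones have total $\rho$-mass $o(1)$ as $\eta\to0$ uniformly in $n$ (by the $\alpha$-stable Lévy measure bound) and the $g_\epsilon$-correction is controlled there using the quantitative form of Lemma 3.1 of \cite{fin}. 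This is precisely the argument already invoked at the end of Section \ref{critical}, so I would phrase it as "following the same reasoning as in the construction of $\rho_n$" and only spell out the extra input, namely the uniform control of the prefactor $\tfrac{r_n+1}{2r_n^i}$, which is new to the subcritical regime. Once both pieces are in hand, the lemma follows.
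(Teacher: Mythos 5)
Your proposal is correct and follows essentially the same route as the paper: sandwich the partial geometric sum $n^{-\alpha/(\alpha+1)}\sum_{j<i}r_n^j$ between $r_n^i\,i\,n^{-\alpha/(\alpha+1)}$ and $i\,n^{-\alpha/(\alpha+1)}$, use $a>\alpha/(\alpha+1)$ to force $r_n^i\to1$ uniformly for $|i|\lesssim n^{\alpha/(\alpha+1)}$, and then note that the same estimate makes the weight prefactors $\tfrac{r_n+1}{2r_n^i}$ tend to $1$ uniformly on compacts, reducing the vague convergence $\nu_n\to\rho$ to the argument already given for $\rho_n$ in the critical section. Your extra discussion of the $g_\epsilon$-nonlinearity is a reasonable unpacking of what the paper delegates to \cite{fin} and to Section \ref{critical}, but it is not a different approach.
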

\begin{proof}
The convergence of the scaling functions is easily seen to be true under the assumption $a>\alpha/(\alpha+1)$ because
$$S^n(n^{-\alpha/(1+\alpha)}\lfloor n^{\alpha/(1+\alpha)}x\rfloor)=\sum_{j=0}^{\lfloor n^{\alpha/(1+\alpha)}x\rfloor}\frac{r_n^j}{n^{\alpha/(\alpha+1)}}$$
and
$$\frac{r_n^{\lfloor n^{\alpha/(1+\alpha)}x\rfloor}\lfloor n^{\alpha/(\alpha+1)}x\rfloor}{n^{\alpha/(1+\alpha)}}\leq\sum_{j=0}^{\lfloor n^{\alpha/(1+\alpha)}x\rfloor}\frac{r_n^j}{n^{\alpha/(\alpha+1)}}\leq\frac{\lfloor n^{\alpha/(\alpha+1)}x\rfloor}{n^{\alpha/(1+\alpha)}}.$$

Now we use the fact that
$$r_n^{\lfloor n^{\alpha/(1+\alpha)}x\rfloor}=\left(1-\frac{2n^{-a}}{1+n^{-a}}\right)^{\lfloor n^{\alpha/(1+\alpha)}x\rfloor}$$
converges to $1$, because $a>\alpha/(1+\alpha)$.

In a similar fashion it can be shown that the ``correcting factors" $\frac{r_n+1}{2r_n^i}$ in the definition of $\nu_n$ converge uniformly to $1$
in any bounded interval. Hence, we can show the convergence of $\nu_n$ to $\rho$ as in the previous section.

\end{proof}
 Lemma \ref{sandrho} implies the vague convergence of $(S^n\circ\nu_n)$ to $\rho$.
 Then, by proposition \ref{stone} we can deduce that $X(S^n\circ\nu_n)$ converges to $X(\rho)$.
 Let $T>0$, by lemma \ref{sandrho} we have that $S^{-1}$ also converges uniformly to the identity. Thus, using the precedent observations, we get that
 $(X(\mu,S)(t):0\leq t\leq T)$ converges to $(X(\rho)(t)0\geq t\geq T)$ in $D[0,T]$ with the Skorohod $J-1$ topology.
 We have proved  that $(X^{(n,a)}(t); t \in [0;T])$ converges in distribution to the F.I.N. diffusion $(Z(t); t \in [0,t]) $ on $(D[0,T],J_1)$.
 
 Thus, it remains to prove that the convergence takes place also in the
 uniform topology. Using the fact that the support of $\mathbb{\rho}$ is $\mathbb{R}$, we can show that $\phi(\rho,id)$ is strictly increasing. The almost sure vague convergence of $S\circ\nu_n$ to $\rho$ implies that, for all $t\geq 0$, $\phi(\nu_n,S^{n})(t)$ converges to $\phi(\rho,id)(t)$. As $l$ is continuous in $t$, we obtain continuity of $\phi(\nu_n,S^{n})$ and of $\phi(\rho,id)$. That, plus the fact that the $\phi(\nu_n,id)$ are non-decreasing implies that that $\phi(\nu_n,S^{n})$ converges uniformly to $\phi(\rho,id)$.
 The function $\phi(\rho,id)$ is almost surely strictly increasing, because the support of $\rho$ is $\mathbb{R}$. Now we can apply corollary 13.6.4 of \cite{witt} to obtain that $\psi(\nu_n,S^n)$ converges uniformly to $\psi(\rho,id)$. That, plus the continuity of the Brownian paths yields that $X(S^n\circ \nu_n)$ converges uniformly to $X(\rho,id)$. Using that $S^{n -1}$ converges to the identity,  we finally get that $X(\nu_n,S^{n})$ converges uniformly to $X(\rho)$.

\bibliographystyle{amsplain}
\bibliography{manolo}

\providecommand{\bysame}{\leavevmode\hbox to3em{\hrulefill}\thinspace}
\providecommand{\MR}{\relax\ifhmode\unskip\space\fi MR }
\providecommand{\MRhref}[2]{%
  \href{http://www.ams.org/mathscinet-getitem?mr=#1}{#2}
}
\providecommand{\href}[2]{#2}
\begin{thebibliography}{10}

\bibitem{bbg}
G.~{Ben Arous}, A.~Bovier and V.~Gayrard, \emph{Glauber dynamics of the random
  energy model. {I}. {M}etastable motion on the extreme states}, Comm. Math.
  Phys. \textbf{235} (2003), no.~3, 379--425.

\bibitem{bbg2}
\bysame, \emph{Glauber dynamics of the random energy model. {II}. {A}ging below
  the critical temperature}, Comm. Math. Phys. \textbf{236} (2003), no.~1,
  1--54.

\bibitem{p-spin}
G.~{Ben Arous}, A.~Bovier and J.~\v{C}ern{\'y}, \emph{Universality of the
  {REM} for dynamics of mean-field spin glasses}, Comm. Math. Phys.
  \textbf{282} (2008), no.~3, 663--695.

\bibitem{bc}
G.~{Ben Arous} and J.~\v{C}ern{\'y}, \emph{Bouchaud's model exhibits two
  different aging regimes in dimension one}, Ann. Appl. Probab. \textbf{15}
  (2005), no.~2, 1161--1192.

\bibitem{bcnotes}
\bysame, \emph{Dynamics of trap models}, Mathematical statistical physics,
  Elsevier B. V., Amsterdam, 2006, pp.~331--394.

\bibitem{zetade}
\bysame, \emph{Scaling limit for trap models on {$\Bbb Z^d$}}, Ann. Probab.
  \textbf{35} (2007), no.~6, 2356--2384.

\bibitem{universal}
\bysame, \emph{The arcsine law as a universal aging scheme for trap models},
  Comm. Pure Appl. Math. \textbf{61} (2008), no.~3, 289--329.

\bibitem{two}
G.~{Ben Arous}, J.~\v{C}ern{\'y} and T.~Mountford, \emph{Aging in
  two-dimensional {B}ouchaud's model}, Probab. Theory Related Fields
  \textbf{134} (2006), no.~1, 1--43.

\bibitem{weakergodicitybreaking}
J.-P. Bouchaud, \emph{Weak ergodicity breaking and aging in disordered
  systems}, J. Phys.I (France) \textbf{2} (1992), 1705--1713.

\bibitem{phys}
L.~Cugliandolo, J.~Kurchan, J.-P. Bouchaud and M.~Mezard, \emph{Out of
  equilibrium dynamics in spin-glasses and other glassy systems}, World
  Scientific, Singapore, (1998).


\bibitem{fin}
L.~R.~G. Fontes, M.~Isopi and C.~M. Newman, \emph{Random walks with strongly
  inhomogeneous rates and singular diffusions: convergence, localization and
  aging in one dimension}, Ann. Probab. \textbf{30} (2002), no.~2, 579--604.


\bibitem{gmw}
P.~M{\"o}rters, N. Gantert and V.~Wachtel, \emph{Trap models with vanishing
  drift: Scaling limits and ageing regimes}, (2010), \textbf{arXiv} 1003.1490v1.

\bibitem{stone}
C.~Stone, \emph{Limit theorems for random walks, birth and death processes, and
  diffusion processes}, Illinois J. Math. \textbf{7} (1963), 638--660.

\bibitem{witt}
W.~Witt, \emph{Stochastic-process limits}, Springer series in operations
  research, Springer-Verlag New York, Inc., (2002).

\bibitem{zindy}
O.~Zindy, \emph{Scaling limit and aging for directed trap models}, Markov
  Process. Related Fields \textbf{15} (2009), no.~1, 31--50.

\end{thebibliography}

\end{document}